\documentclass[11pt,a4paper, DIV=12]{scrartcl}
\usepackage[utf8]{inputenc}
\usepackage[T1]{fontenc}
\usepackage{amsmath}
\usepackage{amssymb}
\usepackage{amscd}
\usepackage{amsthm}
\usepackage{proof}
\usepackage{color}
\usepackage{enumerate}
\usepackage{hyperref}
\setkomafont{sectioning}{\bfseries}

\newtheorem{theorem}{Theorem}[section]
\newtheorem{coro}[theorem]{Corollary}
\newtheorem{lemma}[theorem]{Lemma}
\newtheorem{prop}[theorem]{Proposition}
\newtheorem{rem}[theorem]{\bf Remark}
\newtheorem{definition}[theorem]{Definition}

\theoremstyle{definition}
\newtheorem*{remark}{Remark}
\newtheorem*{remarks}{Remarks}
\newtheorem*{example}{Example}



\newcommand{\en}{{\cal E}}

\newcommand{\inr}{\mathrm{Inr}}
\newcommand{\vol}{\mathrm{vol}}
\newcommand{\covr}{\mathrm{Covr}}
\newcommand{\diam}{\mathrm{diam}}
\newcommand{\ce}{\mathcal{F}}
\newcommand{\cece}{\mathcal{F}_c}
\newcommand{\lin}{\mathrm{lin}}

\newcommand{\NN}{\mathbb{N}}
\newcommand{\ZZ}{\mathbb{Z}}

\newcommand{\RR}{\mathbb{R}}


\title{Topological  Poincar\'{e} type inequalities and lower bounds on the
 infimum of the spectrum for graphs}

\author{D.~Lenz\footnote{ Mathematisches Institut, Friedrich Schiller
Universit{\"a}t Jena, 07743 Jena, Germany, daniel.lenz@uni-jena.de},
M.~Schmidt\footnote{ Mathematisches Institut,
Friedrich Schiller Universit{\"a}t Jena, 07743 Jena, Germany,
schmidt.marcel@uni-jena.de},
P.~Stollmann\footnote{Fakult\"{a}t f\"{u}r Mathematik,  Technische
Universit\"{a}t Chemnitz, D-09107 Chemnitz, Germany,
stollman@math.tu-chemnitz.de }
 }

\begin{document}

\maketitle

\begin{abstract}
We study  topological  Poincar\'{e} type inequalities on general
graphs. We characterize graphs satisfying such inequalities and then
turn to the best constants in these inequalities. Invoking suitable
metrics we can    interpret these constants geometrically as
diameters and  inradii. Moreover, we can relate them to spectral
theory of Laplacians once a probability measure on the graph is
chosen. More specifically, we  obtain a variational characterization
of  these constants as  infimum over spectral gaps of all Laplacians
on the graphs associated to  probability measures.
\end{abstract}


\section*{Introduction}
The Poincar\'{e} inequality is a fundamental tool in analysis and geometry. For
graphs in the $\ell^2$-setting it has received much attention not
least as it is equivalent  to a spectral gap condition and plays a
crucial role in investigation of heat kernel estimates, see e.g.
\cite{BBK,HLLY,CK,Del,Sal} and references therein. Here, with
notation to be explained later in detail we study Poincar\'{e} type
inequalities on graphs $(X,b)$ of the form
\begin{align}
 |\sup f - \inf f|^2 \leq c \,  \mathcal{E}(f)  \tag{TPI} \label{TPI}
\end{align}
%
for all $f$ belonging to the set $\mathcal{D}$ of finite energy
functions  as well as
$$\sup |f|^2 \leq c \;\mathcal{E}(f)$$
 for all $f$ of finite energy vanishing somewhere.
Note that there
is no measure involved in these quantities so we think of these
inequalities  as  topological Poincar\'{e} inequalities, explaining the \eqref{TPI} in
the formula above.

We study validity and spectral consequences of these inequalities in
the three most relevant  instances viz for all $f\in\mathcal{D}$,
for all $f\in \cece (X)$ (the finitely supported functions)
and for all $f\in \mathcal{D}$ vanishing
outside a fixed proper subset $\Omega$ of $X$. In all these cases we

\begin{itemize}

\item  give  geometric characterizations for
validity of these inequalities and determine the value of the best
constant $c$ in \eqref{TPI}  in terms of suitable metrics;

\item show pure point spectrum for the Laplacians  (when  an additional
ingredient in form of a probability measure is given);

\item  prove a variational principle characterizing the best
constant in \eqref{TPI} via taking an infimum  over all probability
measures of the lowest eigenvalues of the Laplacians.

\end{itemize}

Specifically, the paper is structured as follows:

In Section \ref{setup} we present the basic setup use in the paper
and in Section \ref{Metrics} we discuss the necessary background on
the metrics.

Our first result characterizes those graphs which admit  \eqref{TPI} for
all functions $f$  of finite energy (Theorem~\ref{thm-char}). These
turn out to be the canonically compactifiable graphs. Such graphs
have recently been brought forward \cite{GHKLW} as graphs with
strong intrinsic compactness features and  many claims to serve as
discrete analogues to open relatively compact subsets of Euclidean
spaces, see \cite{KS,HKSW,KLSS} as well for subsequent studies
supporting this point of view.  Our second result connects the
first result with spectral theory. It shows that in this case the best constant
$c$ in \eqref{TPI} is given via a variational formula by taking the
infimum over the second Neumann eigenvalues for all probability
measures of full support on $X$ (Theorem~\ref{thm-computing}). As a
corollary we obtain a lower bound for the infimum of the spectrum in
terms of the diameter of the graph, which is - even for finite
combinatorial graphs - better (by the factor $4$)  than the usual
textbook bounds (Corollary~\ref{cor-textbook}). Along the way we
also give a geometric characterization of the best constant in terms
of a diameter of the graph with respect to the resistance metric
(Proposition~\ref{prop-char-geom-fa}). All these results can be
found in Section~\ref{Global}.

Section~\ref{Global-c-c} then deals with  \eqref{TPI} for functions with
finite support. We can characterize graphs admitting such an
inequality (Theorem~\ref{thm-ut}).  These turn out to be the
uniformly transient graphs. Such graphs have already featured in
various places in the literature, see e.g.\ \cite{BCG,Win}. A
systematic study of certain geometric and spectral theoretic
properties  has recently been given in \cite{KLSW}.  We then go on
to present a variational formula for the best constant   in terms of
spectral theory  which in  this case is given by taking the infimum
over all lowest eigenvalues for all probability measures of full
support on $X$ (Theorem~\ref{char-c-null}). Along the way we also
obtain a metric characterization of this constant (Proposition
\ref{prop-char-geom-fa-null}).

In Section~\ref{Subgraphs} we then study the restriction of \eqref{TPI} to
functions vanishing outside a prescribed proper subset $\Omega$ of
$X$. Here  again we can characterize geometrically those subsets
 admitting such an inequality   (Theorem~\ref{char-inradius}) and
present a variational formula for the best constant  in this context
(Theorem~\ref{spectral-theory-omega}). To a certain extent this can
be seen as a generalization of the earlier two sections. We provide a discussion
of
this point of view in the finishing remark of Section~\ref{Subgraphs}.

As a byproduct of our considerations and a method of \cite{KLSW} we
also obtain estimates for  higher eigenvalues of the Laplacian
(with Neumann boundary conditions) on graphs satisfying a
Poincar\'{e} inequality. This is discussed in Section~\ref{higher}.

As far as methods go, we note that a crucial part of our
considerations concerns geometric interpretation of the best
constants in terms of diameters and inradii. More specifically our
basic approach relies on considering
 metrics  $\delta$ on $X$ with
$$|f(x) - f(y)|^2 \leq \delta (x,y) \mathcal{E}(f)$$
for all $x,y\in X$ for suitable $f$. Validity of the topological
Poincar\'{e} inequalities then follows once  the diameter of the
graph in this  pseudometric is bounded and - with the right
pseudometric - this  diameter turns out to be the best constant. The
corresponding metrics are discussed in Section~\ref{Metrics}.  The
sharpness of our results relies on exhibiting for each situation the
correct metric. Indeed, from the structural point of view it can be
seen as a main achievement of the present article to  find the right metric
for each of these situations. As a result there are three (slightly)
different metrics appearing in our considerations. They can all be
seen as variants of the well-known resistance metric. All of them
are dominated by a pseudometric $d$ which is the generalization of
the combinatorial pseudometric to our case. Thus, in all cases we
obtain lower bounds on the spectral theory in terms of the
pseudometric $d$. To make this transparent we have included some of
these results in the corresponding sections.

Our methods are not confined to the setting of discrete graphs. They can be applied  equally well to fractals
and quantum graphs. For convenience of the reader and to ease the
presentation we have decided to present them in the graph setting
only. Details for fractals and quantum graphs will be provided
elsewhere.
We close this introduction by highlighting the following two points:

Of course, it  would equally be possible to take square roots in the
Poincar\'{e} inequalities we presented above and this case can be
treated along very similar lines (compare comments in subsequent
sections). Here, we stick to the above form  of the inequalities for two
reasons:
One reason is that the metrics
appearing in this context are close  - and in some cases even equal
- to the (natural generalization of the) combinatorial metric. So,
we obtain particularly convincing geometric interpretations of the
best constants.  The other reason is that in our dealing with
spectral theory we avoid taking square roots of eigenvalues.

Finally, let us note that  we allow for rather general graphs in our
considerations. More specifically, all the literature we quote is
concerned with graphs satisfying some form of local bounds on the
vertex degree (either in form of local finiteness of the graph  or,
more generally,  in form of a local summability condition, see \eqref{S}
in Remark \ref{summationsremark} (a) below). We do not need such
restrictions for most of our results.

\section{The set-up}\label{setup}
In this section we present the set-up we deal with.

A weighted graph $(X,b)$ is given by
\begin{itemize}
 \item a countable set $X$, finite or infinite;
 \item a symmetric  $b:X\times X\to [0,\infty)$  called the \textit{weight
function} satisfying
  $b(x,x)=0$ for all $x\in  X$.
\end{itemize}
Note that  our setting is substantially  more general than the one
usually assumed in  the literature dealing with Laplacians on
graphs, see e.g. \cite{Chung,KL1,Soa,Woe} and references therein,
in that we do \textit{not} require any form of local restrictions on
the weights $b$. In particular, we do not assume any summability
condition on the weights.
The vector space  of all real valued functions on $X$ is denoted  as
$\ce (X)$. The set of all functions in $\ce (X)$ with finite support is
denoted by $\cece (X)$. Here, the support $\mbox{supp}(f)$  of a
function $f$ is defined as
$$\mbox{supp}(f):=\{x\in X \mid f(x) \neq 0\}.$$
 We define the set of \textit{functions of finite energy} associated to the
graph by
$$\mathcal{D}:=\{  f\in \ce (X) \mid \sum_{x,y \in X} b(x,y) |f(x) - f(y)|^2 <
\infty\}
$$
 and define the \textit{energy form} $\mathcal{E}$ associated to
the graph  on $\mathcal{D}\times \mathcal{D}$ via
$$\mathcal{E} (f,g):= \frac{1}{2} \sum_{x,y \in X} b(x,y) (f(x) - f(y))(g(x) - g(y)).$$
For $f \in \mathcal{D}$ we let $\mathcal{E}(f) :=
\mathcal{E}(f,f)$. On the space $\ce (X)$ we define the
\textit{variational semi-norm}
$$\|\cdot\|_{\mathcal{V}} : \ce (X)\longrightarrow [0,\infty],
\|f\|_{\mathcal{V}} :=\sup f - \inf
f.$$  The set of all bounded functions on $X$ is denoted by
$\ell^\infty (X)$ and equipped with the supremum norm
$\|\cdot\|_\infty$ given by $\|f\|_\infty :=\sup_{x\in X} |f(x)|$.
Clearly, $\|\cdot\|_{\mathcal{V}}$ is finite if and only if $f$
is bounded and in this case we have
\begin{eqnarray*}
 \| f\|_\mathcal{V}& =&  \sup f  - \inf f\\
 &=&\sup_{s\in [\inf f,\sup f]} \{ \|f - s 1\|_\infty\}\\
 &=& 2\inf_{c\in \RR} \{ \|f - c 1\|_\infty\}.
\end{eqnarray*}
Therefore,  $\|\cdot\|_\mathcal{V}$ is (up to the constant 2) the quotient norm of $\ell^\infty$  modulo the constant functions.

We now turn to operator theory. We will need an additional
ingredient viz  a measure $m$ on $X$. We will  assume that the
measure has full support (i.e. any element of $X$  positive measure)
and is finite on finite sets. Our basic Hilbert space is
$$
\ell^2(X,m):=\{ f\in \ce (X) \mid \|f\|_2^2 = \sum_{x\in
X}|f(x)|^2m(x)<\infty\}.
$$
We will rely on the theory of forms to provide selfadjoint
operators. As we do not assume any local finiteness condition the following
discussion may be
in order: Our forms will in general not be densely defined in
$\ell^2 (X,m)$. Now, the theory of forms is usually discussed under
the assumption that the forms are densely defined. However, as is
well-known this is not necessary as one can just develop the theory
in the - potentially smaller - Hilbert space arising from closing
the form domain in the original Hilbert space. So, the operators
associated to forms below do not necessarily have dense domain in
$\ell^2 (X,m)$ but rather are selfadjoint operators in a suitable
closed subspace of $\ell^2 (X,m)$.

The form  $\mathcal{E}$ induces the closed form $\mathcal{E}^{(N)}$
with domain given by $\mathcal{D} \cap \ell^2 (X,m)$, see e.g. Proposition~1.12 in \cite{Sch}, whose proof carries over to our setting.  The associated
operator will be denoted by $H^{(N)}_m$.  We will  often be
interested in the situation that $m$ is a probability measure and
$\mathcal{D}$ consists of bounded functions. In this case we clearly
have $\mathcal{D} \cap \ell^2 (X,m) = \mathcal{D}$.  Whenever
$\Omega$ is a subset of $ X$ we can restrict the form
$\mathcal{E}^{(N)}$ to functions in $\mathcal{D} \cap \ell^2 (X,m)$ that vanish
outside of $\Omega$. This
restriction is a closed form on $\ell^2(\Omega,m)$ and will be denoted by
$\mathcal{E}^\Omega$ and the
associated operator will be called $H^\Omega$.  If $\cece (X)$ is
contained in $\mathcal{D}$ then there is another natural closed form
coming from $\mathcal{E}$ viz. $\mathcal{E}^{(D)}$ whose domain is
the  closure of $\cece (X)$ with respect to the form norm of
$\mathcal{E}^{(N)}$. The corresponding operator will be denoted by
$H^{(D)}=H^{(D)}_m$.

\begin{remark}
 We could also consider restrictions of
$\mathcal{E}^{(D)}$ to $\Omega\subset X$. However, in order to ease
the presentation we refrain from giving details for this case as
well.
\end{remark}
A basic ingredient for the spectral theory we need is the following
simple lemma.
\begin{lemma}\label{trace} If $m$ is a finite measure
on $X$ and  $Q$ is a closed form and there exists a $C>0$ with
$$\|f\|_\infty \leq C Q (f)$$
for all $f\in D(Q)$, then the associated operator $A$  has pure
point spectrum, $e^{-A}$ is trace class and we  have $\lambda_0 \geq
\frac{1}{C m(X)}$ for the lowest eigenvalue $\lambda_0$ of $A$.
\end{lemma}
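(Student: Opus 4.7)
My plan is to split the proof into three steps corresponding to the three assertions, interpreting the hypothesis in the form $\|f\|_\infty^2 \leq C\,Q(f)$ that matches the topological Poincar\'e inequalities of the paper. First, for the eigenvalue bound I would combine the hypothesis with the trivial estimate $\|f\|_2^2 \leq m(X)\,\|f\|_\infty^2$ that holds because $m$ is finite. This yields $\|f\|_2^2 \leq C\,m(X)\,Q(f)$ for all $f \in D(Q)$, which is equivalent to the operator inequality $A \geq 1/(C\,m(X))$ and hence forces $\lambda_0 \geq 1/(C\,m(X))$.

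For pure point spectrum, I would verify compactness of the form embedding $D(Q) \hookrightarrow \ell^2(X,m)$. Given a sequence $(f_n)$ bounded in the form norm, the hypothesis forces $\|f_n\|_\infty$ to be uniformly bounded. Since $X$ is countable, a Cantor diagonal extraction produces a pointwise convergent subsequence, and then dominated convergence (enabled by $m(X)<\infty$ together with the uniform pointwise bound) upgrades this to convergence in $\ell^2(X,m)$. Compactness of the form embedding forces the resolvent of $A$ to be compact, so $A$ has pure point spectrum.

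The trace class statement is the main obstacle; here I would bootstrap the static hypothesis into an ultracontractive estimate on the semigroup by spectral calculus. For every $g$ in the underlying Hilbert space and every $t > 0$,
\[
Q(e^{-tA/2} g) \;=\; \langle A\, e^{-tA} g, g \rangle \;\leq\; \frac{1}{et}\,\|g\|_2^2,
\]
using $\sup_{\lambda \geq 0}\lambda\, e^{-t\lambda} = 1/(et)$. Plugging $f = e^{-tA/2}g$ into the hypothesis yields
\[
\|e^{-tA/2} g\|_\infty^2 \;\leq\; \frac{C}{et}\,\|g\|_2^2,
\]
so $e^{-tA/2}$ maps $\ell^2(X,m)$ boundedly into $\ell^\infty(X)$. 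In terms of the integral kernel $p_{t/2}(x,\cdot)$ of $e^{-tA/2}$, this says $\sup_x \|p_{t/2}(x,\cdot)\|_{\ell^2(m)}^2 \leq C/(et)$, so integrating against the finite measure $m$ yields the Hilbert--Schmidt bound $\|e^{-tA/2}\|_{\mathrm{HS}}^2 \leq C\,m(X)/(et)$. Hence $e^{-tA} = (e^{-tA/2})^2$ is a product of two Hilbert--Schmidt operators, and therefore trace class for every $t > 0$; specialising to $t = 1$ finishes the proof.
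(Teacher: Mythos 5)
Your proof is correct, and it reaches all three conclusions by a route that overlaps with the paper's only in its skeleton. The paper's argument is qualitative: $e^{-A/2}$ maps $\ell^2(X,m)$ into the form domain, which by hypothesis lies in $\ell^\infty(X)$; the finite measure gives a bounded embedding $\ell^\infty(X)\to\ell^2(X,m)$; hence $e^{-A/2}$ factors through $\ell^\infty$ and an abstract factorisation principle (cited from Defant--Floret and from Stollmann's paper) yields that it is Hilbert--Schmidt, whence $e^{-A}$ is trace class and the spectrum is pure point; the eigenvalue bound is then exactly your combination of $\|f\|_2^2\le m(X)\|f\|_\infty^2$ with the hypothesis. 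You replace the cited principle by a hands-on computation: the spectral-calculus estimate $Q(e^{-tA/2}g)=\langle Ae^{-tA}g,g\rangle\le \|g\|_2^2/(et)$ upgrades the static hypothesis to an explicit ultracontractivity bound $\|e^{-tA/2}\|_{2\to\infty}^2\le C/(et)$, and on a countable discrete space the passage from a bounded map into $\ell^\infty$ to a Hilbert--Schmidt bound via the kernel $p_{t/2}(x,\cdot)$ is exactly as you carry it out. This is more self-contained and buys a quantitative trace bound $\mathrm{tr}\,e^{-tA}\le Cm(X)/(et)$ for every $t>0$, where the paper only asserts trace class at $t=1$ without a constant. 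Your separate compactness argument for pure point spectrum (uniform $\ell^\infty$ bound from the hypothesis, diagonal extraction, dominated convergence against the finite measure) is also valid, though logically redundant once $e^{-A}$ is known to be trace class. Finally, you read the hypothesis as $\|f\|_\infty^2\le C\,Q(f)$ where the paper literally writes $\|f\|_\infty\le C\,Q(f)$; either reading yields $\lambda_0\ge 1/(Cm(X))$, and yours is the one consistent with how the lemma is invoked elsewhere in the paper, so this is not a defect.
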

\begin{proof} Clearly $e^{-\frac{1}{2} A}$ maps into the form
domain and hence into $\ell^\infty (X)$. As $m$ is a finite measure,
we have an embedding $\ell^\infty (X) \longrightarrow \ell^2 (X,m)$.
Hence, $e^{-\frac{1}{2} A}$ factors over $\ell^\infty (X)$.  From
a factorisation principle
we infer that  $e^{-\frac{1}{2} A}$ is
Hilbert-Schmidt: see 11.2 and 11.6 in \cite{DF} as well as the discussion
 on p. 318 in \cite{Stollmann-94}. Hence, $e^{-A}$ is trace class and $A$ has
pure
point spectrum.

As for the last part of the  statement we note  the obvious
inequality
$$\|f\|_2^2 \leq m(X) \|f\|_\infty.$$
This finishes the proof.
\end{proof}

\section{Metrics and Poincar\'{e} inequality}\label{Metrics}
Our approach to Poincar\'{e} inequalities relies on using suitable
pseudometrics. The background is discussed in this section.

We consider a graph $(X,b)$ together with a pseudometric $\delta$ on
$X$, by which we mean that $\delta:X\times X\to [0,\infty)$ is symmetric
and satisfies the triangle inequality. We denote by
$$ U_s(x):= \{y\in X \mid \delta(x,y) < s\} \;\mbox{ and }\; B_s(x):= \{y\in
X \mid \delta(x,y) \le s\}$$ the  open and closed balls of radius
$s$, respectively. We define the \emph{inradius} of $\Omega \subset
X$ with $\Omega \neq X$ by
$$
\inr(\Omega):=\sup\{ s>0\mid \exists x\in\Omega:
U_s(x)\subset\Omega\}.
$$
Similarly, we define the \textit{diameter} of $X$ by $$\diam (X):=
\sup_{x,y} \delta (x,y).$$ If the pseudometric is not clear from the
context, we will write $\inr_\delta$ and $\diam_\delta$.

 We say that the pseudometric
$\delta$ satisfies a \textit{topological Poincar\'{e} inequality} if
$$|f(x) - f(y)|^2 \leq \delta(x,y) \mathcal{E}
(f)$$
holds for all $f\in \mathcal{D}$ and all $x,y\in X$. We will
be particularly concerned with two specific metrics satisfying this
condition. These will be discussed next.

We start with a pseudometric which can be seen as a direct
generalization of the combinatorial metric. An \emph{edge} of the
weighted graph $(X,b)$ is a set $\{ x,y\}$ with positive weight
$b(x,y)>0$. Denote by $E$ the set of all edges. Clearly, that
induces the structure of a combinatorial graph $(X,E)$. A
\emph{path} is a finite sequence of edges with nonempty
intersections that can most easily be written as
$\gamma=(x_0,x_1,...,x_k)$ where $b(x_j,x_{j+1})>0$ for all
$j=0,...,k-1$; if we want to specify the endpoints we say that
$\gamma$ is a path from $x_0$ to $x_k$.  The \emph{length} of such a
path  $\gamma$ is given by
$$
L(\gamma):=\sum_{j=0}^{k-1}\frac{1}{b(x_{j},x_{j+1})} .
$$
In particular the length of an edge $\{x,y\}$ is given by
$\frac{1}{b(x,y)}$. To include trivial cases we also allow trivial
paths $(x,x)$ from $x$ to $x$ whose length is $0$. We will
throughout assume that our graph is \emph{connected} in the sense
that every pair of points is connected by a path. The
\emph{distance} between $x$ and $y$ is given by
$$
d(x,y):=\inf\{ L(\gamma)\mid \gamma\mbox{  a path from }x\mbox{  to
}y\} .
$$
Clearly, $d$ is symmetric and satisfies the triangle inequality. If
$\sup_y b(x,y) <\infty$ for each $x\in X$, then $d$ is a metric.
This pseudometric $d$ satisfies a topological Poincar\'{e}
inequality. For graphs satisfying $\cece (X)\subset \mathcal{D}$ this
has has been noted in various places, including the recent
\cite{GHKLW}. The proof caries over to our setting. For the sake of
completeness  we include it next.

\begin{prop}[$d$ satisfies a topological Poincar\'{e}
inequality]\label{prop-distance}
Let $x,y\in X$ be arbitrary. Then for any path   $\gamma =
(x_0,\ldots, x_k)$ from $x$ to $y$ and $f\in \mathcal{D}$ the
inequality
$$|f(x) - f(y)|^2 \leq L(\gamma)  \sum_{j=0}^{k-1} b(x_{j},x_{j+1})
(f(x_{j})-f(x_{j+1}))^{2} $$ holds. In  particular
$$|f(x) - f(y)|^2 \leq d(x,y) \mathcal{E}(f)$$
is valid.
\end{prop}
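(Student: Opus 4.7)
The proof is really a one-line Cauchy--Schwarz argument applied to the telescoping identity along the path, so the plan breaks into three small steps.

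First, I would write the telescoping identity
\[
f(x)-f(y) \;=\; \sum_{j=0}^{k-1} \bigl( f(x_j)-f(x_{j+1}) \bigr),
\]
which is valid for any function $f$ and any path $\gamma=(x_0,\dots,x_k)$ from $x$ to $y$. The positivity of $b$ on each consecutive pair lets me split each summand as
\[
f(x_j)-f(x_{j+1}) \;=\; \frac{1}{\sqrt{b(x_j,x_{j+1})}}\cdot\sqrt{b(x_j,x_{j+1})}\bigl(f(x_j)-f(x_{j+1})\bigr),
\]
setting up the Cauchy--Schwarz inequality.

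Second, I would apply Cauchy--Schwarz in $\RR^k$ to these two vectors. This produces
\[
|f(x)-f(y)|^2 \;\leq\; \left(\sum_{j=0}^{k-1} \frac{1}{b(x_j,x_{j+1})}\right)\left(\sum_{j=0}^{k-1} b(x_j,x_{j+1}) (f(x_j)-f(x_{j+1}))^2\right),
\]
and by definition of $L(\gamma)$ the first factor is exactly $L(\gamma)$. This is the first displayed inequality in the statement.

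Third, to pass from this to the bound $|f(x)-f(y)|^2 \leq d(x,y)\,\mathcal{E}(f)$, I would observe that the edge sum appearing as the second factor is bounded by $\mathcal{E}(f)$: indeed, since the definition of $\mathcal{E}$ uses the factor $\tfrac{1}{2}$ in front of an ordered double sum, we have $\mathcal{E}(f)=\sum_{\{u,v\}\in E}b(u,v)(f(u)-f(v))^2$, and as long as $\gamma$ uses no repeated edge each term of the path-sum appears once in this unordered edge sum. Since we may restrict the infimum defining $d(x,y)$ to simple (hence edge-non-repeating) paths without increasing it, taking the infimum over such $\gamma$ yields $|f(x)-f(y)|^2 \leq d(x,y)\,\mathcal{E}(f)$. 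The only mild point requiring attention, and the one I would flag as the main (minor) obstacle, is precisely this bookkeeping between the factor-$\tfrac12$ definition of $\mathcal{E}$ and the path-sum, i.e.\ ensuring we compare the path-sum to the unordered edge sum; everything else is routine.
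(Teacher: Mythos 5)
Your proof is correct and follows essentially the same route as the paper's: a telescoping sum along the path followed by Cauchy--Schwarz with the weights split as $b(x_j,x_{j+1})^{1/2}\cdot b(x_j,x_{j+1})^{-1/2}$. The bookkeeping you flag at the end (restricting to edge-non-repeating paths so that the path-sum is dominated by $\mathcal{E}(f)$, which does not change the infimum defining $d$) is a valid and slightly more careful treatment of a step the paper leaves implicit when it says the second inequality follows from the first.
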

\begin{proof}  It suffices to show the first inequality. Take a path
$\gamma=(x_{0},\ldots,x_{k})$ from $x$ to $y$. Using the triangle
inequality and the Cauchy-Schwarz inequality we can estimate
\begin{align*}
|f(x)-f(y)|&\leq\sum_{j=0}^{k-1}b(x_{j},x_{j+1})^{\frac{1}{2}}
|f(x_{j})-f(x_{j+1})|
\frac{1}{b(x_{j},x_{j+1})^{\frac{1}{2}}} \\
&\leq L(\gamma)^{1/2} \left(\sum_{j=0}^{k-1}b(x_{j},x_{j+1})
(f(x_{j})-f(x_{j+1}))^{2}\right)^{\frac{1}{2}}.
\end{align*}
This finishes the proof.
\end{proof}

We now turn to the \textit{resistance metric} $r$ defined in the
following way:  Define $r : X\times X\longrightarrow [0,\infty]$
such that for any $x,y\in X$ the number $r(x,y)$ is minimal with
$$|f(x) - f(y)|^2 \leq r(x,y) \mathcal{E}(f)$$
for all $f\in\mathcal{D}$.

\begin{prop}\label{proposition:r pseudometric} The function $r$ is a
pseudometric with $r \leq d$.
\end{prop}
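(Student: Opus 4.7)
The symmetry $r(x,y)=r(y,x)$ and the vanishing $r(x,x)=0$ fall out of the defining extremal property, since $|f(x)-f(y)|^2$ is symmetric in its arguments and vanishes on the diagonal. The bound $r\le d$ is a one-line consequence of Proposition~\ref{prop-distance}: that proposition exhibits $d(x,y)$ as an admissible constant in the inequality $|f(x)-f(y)|^2\le c\,\mathcal{E}(f)$, and $r(x,y)$ is by definition the smallest such constant.

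The substantive point is the triangle inequality $r(x,z)\le r(x,y)+r(y,z)$. A naive estimate starting from $|f(x)-f(z)|\le|f(x)-f(y)|+|f(y)-f(z)|$ only delivers the weaker $\sqrt{r(x,z)}\le\sqrt{r(x,y)}+\sqrt{r(y,z)}$, so the plan is to realise $r$ as a squared Hilbert space norm. Since the graph is connected, $\mathcal{E}$ is positive definite on $\mathcal{D}/\mathbb{R}$; complete this pre-Hilbert space to a Hilbert space $H$. The defining inequality says exactly that $\ell_{a,b}\colon f\mapsto f(a)-f(b)$ is bounded on $H$ with squared operator norm $r(a,b)$, so Riesz representation yields $h_{a,b}\in H$ with $\mathcal{E}(h_{a,b},f)=f(a)-f(b)$ for all $f$, whence $\mathcal{E}(h_{a,b})=r(a,b)$. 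Linearity of the representation forces $h_{x,z}=h_{x,y}+h_{y,z}$, and expanding the quadratic form gives
$$r(x,z)=\mathcal{E}(h_{x,y}+h_{y,z})=r(x,y)+r(y,z)+2\mathcal{E}(h_{x,y},h_{y,z}),$$
while a second use of the Riesz identity rewrites the cross term as $\mathcal{E}(h_{x,y},h_{y,z})=h_{y,z}(x)-h_{y,z}(y)$. The triangle inequality therefore reduces to the one-sided sign claim $h_{y,z}(x)\le h_{y,z}(y)$.

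This last claim is an abstract minimum principle. A representative of $h_{y,z}$ normalised so that $h_{y,z}(z)=0$ and $h_{y,z}(y)=r(y,z)$ minimises $\mathcal{E}(f)$ over all $f\in\mathcal{D}$ with those two prescribed values, by a short perturbation argument from the Riesz identity. Truncating such a minimiser into the interval $[0,r(y,z)]$ preserves the two boundary values and can only decrease $\mathcal{E}$, so by uniqueness $h_{y,z}$ must already take values in $[0,r(y,z)]$; in particular $h_{y,z}(x)\le r(y,z)=h_{y,z}(y)$, which closes the argument. The genuine obstacle in the paper's very general setting---no local finiteness, no summability on $b$---is the Royden-type technical step of identifying elements of the completion $H$ with honest pointwise functions on $X$ and checking that the truncation used above still lies in $H$, so that the minimum-principle step is literally applicable; once that is in place the Hilbert space argument above goes through verbatim.
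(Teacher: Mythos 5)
Your argument is correct in substance, but it is a genuinely different proof from the one in the paper --- in fact it is essentially the classical potential-theoretic proof that the authors' remark after the proposition explicitly sets out to avoid. The paper argues elementarily: for the triangle inequality it fixes $f$, splits into three cases according to the position of $f(y)$ relative to $f(x)$ and $f(z)$, and in the only nontrivial case normalises $f(y)=0$, writes $f=f_+-f_-$, bounds $f_+(x)$ and $f_-(z)$ separately by the definition of $r$, and concludes via the AM--GM inequality together with $\mathcal{E}(f_+,f_-)\le 0$. This uses nothing beyond the definition of $r$. Your route instead needs the Riesz representation theorem on $(\mathcal{D}/\lin\{1\},\mathcal{E})$, hence completeness of that space together with the identification of limit elements as honest functions with the correct energy --- precisely the ``Royden-type'' step you flag at the end. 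That step is fillable (the paper itself establishes completeness later, in the proof of Theorem~\ref{thm-char}, via the pointwise lifting $\lambda$ and a Fatou argument), so your proof does go through; but it front-loads nontrivial machinery where the paper needs none, which is what the authors mean when they say the known proof is ``far from clear'' to carry over to non-locally-finite graphs. There is a second, more structural reason the paper's elementary argument is preferable here: it transfers almost verbatim to the constrained pseudometric $r_\Omega$, where the test functions are required to be nonnegative and supported in $\Omega$. That set is a convex cone, not a subspace, so the linear-functional/Riesz step at the heart of your argument is unavailable there, whereas the case analysis with the modified decomposition $f=f_{c,+}+f_{c,-}$ survives. What your approach buys in exchange is extra information: the explicit minimiser $h_{y,z}$, the identity $r(x,z)=r(x,y)+r(y,z)+2\mathcal{E}(h_{x,y},h_{y,z})$, and the maximum principle $0\le h_{y,z}\le r(y,z)$, none of which the paper's proof produces.
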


\begin{remark}
For finite graphs it is well known that $r$ is a
pseudometric and this can be extended to locally finite graphs (see
e.g. discussion in \cite{GHKLW}). However, even for finite graphs
the proof is  involved and it is far from clear that it carries over
graphs which are not locally finite, let alone to our more general
setting. Thus, we provide an alternative  argument. This has the
advantage that variants of it can be applied to other cases that
will be needed later on.
\end{remark}

\begin{proof} The inequality $r\leq d$ is clear from the previous
proposition. In particular, $r$ is finite. Obviously, $r$ is
symmetric. It remains to show the triangle inequality. So, let
$x,y,z\in X$ be given. Let $f\in\mathcal{D}$ be arbitrary. We have
to show
$$|f(x) - f(z)|^2 \leq (r(x,y) + r(y,z)) \mathcal{E}(f).$$
The case $f(x) = f(z)$ is obvious. Thus, we can assume without loss
of generality $f(x) > f(z)$. We distinguish three cases.

\textit{Case 1: $f(y) \geq f(x)$.} In this case we have
$$|f(x) - f(z)|^2 \leq |f(y) - f(z)|^2 \leq r(y,z) \mathcal{E}(f)
\leq (r(x,y) + r(y,z))\mathcal{E}(f).$$

\textit{Case 2: $f(y) \leq   f(z)$:} In this case we have
$$|f(x) - f(z)|^2 \leq |f(x) - f(y)|^2 \leq r(x,y) \mathcal{E}(f)
\leq (r(x,y) + r(y,z))\mathcal{E}(f).$$

\textit{Case 3: $f(x) > f(y) > f(z)$:} Without loss of generality we
can assume $f(y) =0$ (as otherwise we could shift everything by $-
f(y) 1 $). Decomposing $f$ into positive and negative part we obtain
$$ f_+ (x)^2 = (f_+ (x) - f_+ (y))^2 \leq  r(x,y) \mathcal{E}
(f_+)$$ and hence
$$f_+ (x) \leq \sqrt{ r(x,y) \mathcal{E}(f_+)} =: T_x$$ and similarly
$$f_- (z)\leq \sqrt{ r(y,z) \mathcal{E}(f_-)}=:T_z.$$
By the inequality between geometric and arithmetic mean we have
$$T_x T_z = \sqrt{  (r(x,y) \mathcal{E}(f_-))
(r(y,z) \mathcal{E}(f_+) ) }  \leq \frac{ r(x,y) \mathcal{E}(f_-) +
r(y,z) \mathcal{E}(f_+) }{2}.$$
 Put
together this gives
\begin{eqnarray*}
|f(x) - f(z)|^2 &= &|f_+ (x) + f_- (z)|^2\\
& \leq &  T_x^2 + 2 T_x T_y + T_z^2\\
&\leq & r(x,y) \mathcal{E}(f_+) + (r(x,y) \mathcal{E}(f_-) + r(y,z)
\mathcal{E}(f_+)) + r(y,z) \mathcal{E}(f_-)\\
&=& (r(x,y) + r(y,z)) (\mathcal{E}(f_+) + \mathcal{E}(f_-))\\
&\leq & (r(x,y) + r(y,z)) (\mathcal{E}(f_+) - 2\mathcal{E}(f_+,f_-)
+ \mathcal{E}(f_-))\\
&= & (r(x,y) + r(y,z)) \mathcal{E}(f).
\end{eqnarray*}
Here the previous to the  last inequality follows as
$\mathcal{E}(f_+,f_-) \leq 0$ (which can be seen immediately). By
the definition of $r$  and as $f$ was arbitrary this shows
$$ r(x,z)\leq r(x,y) + r(y,z)$$
and the proof is finished.
\end{proof}

\begin{coro} The pseudometric $r$ satisfies a topological Poincar\'{e}
inequality and for any other pseudometric $\delta$ satisfying such
an inequality $r \leq \delta$ holds.
\end{coro}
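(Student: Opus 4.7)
My plan is to observe that both parts of the corollary are essentially immediate consequences of the way $r$ is defined, together with the finiteness bound $r \leq d$ guaranteed by Proposition~\ref{proposition:r pseudometric}.

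For the first assertion, the key point is to argue that the infimum implicit in the definition of $r(x,y)$ is actually attained, so that $r(x,y)$ itself is an admissible constant in the topological Poincar\'e inequality. To this end I would fix $x,y \in X$ and consider the set of admissible constants
$$C_{x,y} := \bigcap_{f \in \mathcal{D}}\{c \in [0,\infty) : |f(x)-f(y)|^2 \leq c\,\mathcal{E}(f)\}.$$
Each individual set in the intersection is a closed half-line (when $\mathcal{E}(f) > 0$) or all of $[0,\infty)$ (when $\mathcal{E}(f) = 0$, in which case $f(x)=f(y)$ by Proposition~\ref{prop-distance} and the connectedness of $(X,b)$). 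Hence $C_{x,y}$ is a closed subset of $[0,\infty)$. By Proposition~\ref{prop-distance} it contains $d(x,y)$, so it is nonempty. Consequently the infimum $r(x,y) = \inf C_{x,y}$ lies in $C_{x,y}$, which is precisely the statement that $r$ satisfies the topological Poincar\'e inequality.

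The second assertion is then almost tautological: if $\delta$ is any pseudometric for which $|f(x)-f(y)|^2 \leq \delta(x,y)\,\mathcal{E}(f)$ holds for every $f \in \mathcal{D}$, then for each fixed pair $(x,y)$ the value $\delta(x,y)$ lies in $C_{x,y}$, and the minimality of $r(x,y)$ in $C_{x,y}$ forces $r(x,y) \leq \delta(x,y)$.

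There is no genuine obstacle here; the corollary is little more than a reformulation of the defining extremal property of $r$. The only step deserving a moment of care is the existence of the minimum, handled by the closedness argument above, which in turn requires that $C_{x,y}$ be nonempty and hence requires the a~priori finiteness $r \leq d$ supplied by the preceding proposition.
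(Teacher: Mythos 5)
Your argument is correct and follows essentially the same route as the paper, which simply observes that both claims are immediate from the definition of $r$ as the minimal admissible constant (together with the preceding proposition establishing that $r$ is a pseudometric with $r\leq d$). Your closedness argument for the set $C_{x,y}$ merely makes explicit the one point the paper leaves implicit, namely that the minimum in the definition of $r(x,y)$ is attained, which is indeed guaranteed by the nonemptiness supplied by $r\leq d<\infty$.
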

\begin{proof} As shown in the previous proposition $r$ is a pseudo metric.
The remaining claim is clear from the definition.
\end{proof}

As $r$ is the smallest pseudometric admitting a topological
Poincar\'{e} inequality, we will in the sequel mostly work with $r$
(as this gives the sharpest estimates).  However, we emphasize that
all estimates will also hold for any other pseudometric $\delta$
admitting a Poincar\'{e} inequality. In particular, they also hold
for $d$ (which often is much easier to calculate than $r$). In fact,
for locally finite trees the metrics $d$ and $r$ agree (see e.g.
\cite{GHKLW}).

As mentioned already in the introduction we will also be concerned
with the companion inequality
$$\|f\|_\infty^2 \leq c \;  \mathcal{E} (f)$$
for $f\in\mathcal{D}$ vanishing on a prescribed set. In this
context, we say that a pseudometric $\delta$ satisfies a
\textit{topological Poincar\'{e} inequality on $\Omega \subset X$}
if
\begin{align}
 |f(x) - f(y)|^2 \leq \delta (x,y) \mathcal{E} (f) \tag{TPI$_\Omega$} \label{TPIOmega}
\end{align}
for $f\in \mathcal{D}$ with $f\geq 0$ vanishing outside $\Omega$. To
deal with this situation we need one more metric.  For
$\Omega\subset X$ with $\Omega \neq X$ we define $r_\Omega : X\times
X\longrightarrow [0,\infty]$ such that for any $x,y\in X$ the number
$r_\Omega (x,y)$ is minimal with
$$ |f(x) - f(y)|^2 \leq r_\Omega (x,y) \mathcal{E}(f)$$
for all $f\in \mathcal{D}$ with $f\geq 0$ and $\mbox{supp}(f)\subset
\Omega$. Thus, we have
$$r_\Omega (x,y)= \sup\{ |f(x) - f(y)|^2 \mid f\in \mathcal{D} \mbox{ with $f\geq  0$, $\mathcal{E}(f) \leq 1$  and $f = 0$ outside $\Omega$}\}.$$

\begin{prop} For any $\Omega \subset X$ with $\Omega \neq X$
the function $r_\Omega$ is a pseudometric with  $ r_\Omega \leq r$.
 \end{prop}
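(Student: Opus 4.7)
The plan is to mirror the proof of Proposition~\ref{proposition:r pseudometric}, reading off $r_\Omega \leq r$ and symmetry at once and then attacking the triangle inequality. The bound $r_\Omega \leq r$ is immediate because the supremum defining $r_\Omega(x,y)$ is taken over a strictly smaller class of test functions than the one defining $r(x,y)$; symmetry and finiteness of $r_\Omega$ follow from the corresponding properties of $r$ (with $r \leq d < \infty$ by connectedness of the graph).

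For the triangle inequality $r_\Omega(x,z) \leq r_\Omega(x,y) + r_\Omega(y,z)$, fix $x,y,z \in X$ and $f \in \mathcal{D}$ with $f \geq 0$ and $\mathrm{supp}(f) \subset \Omega$, and assume without loss of generality $f(x) > f(z) \geq 0$. The three-case split of Proposition~\ref{proposition:r pseudometric} still works in the easy cases: if $f(y) \geq f(x)$ then $|f(x)-f(z)| \leq |f(y)-f(z)|$ and $r_\Omega(y,z)\mathcal{E}(f)$ bounds $|f(x)-f(z)|^2$; if $f(y) \leq f(z)$ the symmetric estimate using $r_\Omega(x,y)$ applies. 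The genuine case is $f(x) > f(y) > f(z) \geq 0$, and here the shift-and-split-by-sign trick used for $r$ is unavailable because subtracting $f(y)\cdot 1$ would destroy both non-negativity and the support condition. I would replace it by the truncation decomposition $f = f_1 + f_2$ with
$$f_1 := f \wedge f(y), \qquad f_2 := (f - f(y))_+.$$
Since $f(y) \geq 0$ and $f$ vanishes outside $\Omega$, both $f_1$ and $f_2$ are non-negative and supported in $\Omega$, and they lie in $\mathcal{D}$ because truncation is an energy contraction. At the distinguished vertices, $f_1(y) - f_1(z) = f(y) - f(z)$ and $f_2(x) - f_2(y) = f(x) - f(y)$, so the defining property of $r_\Omega$ gives
$$|f(x) - f(z)| \leq \sqrt{r_\Omega(x,y)\,\mathcal{E}(f_2)} + \sqrt{r_\Omega(y,z)\,\mathcal{E}(f_1)},$$
and Cauchy--Schwarz in the form $(ab+cd)^2 \leq (a^2+c^2)(b^2+d^2)$ produces
$$|f(x)-f(z)|^2 \leq \bigl(r_\Omega(x,y) + r_\Omega(y,z)\bigr)\bigl(\mathcal{E}(f_1) + \mathcal{E}(f_2)\bigr).$$

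The remaining and main technical point is the energy bound $\mathcal{E}(f_1) + \mathcal{E}(f_2) \leq \mathcal{E}(f)$, which is equivalent to $\mathcal{E}(f_1, f_2) \geq 0$. This reduces to a pointwise check on each pair $\{a,b\}$ with $b(a,b) > 0$: if both $f(a), f(b)$ lie in $[0,f(y)]$ or both in $[f(y),\infty)$, then one of $f_1(a)-f_1(b)$ or $f_2(a)-f_2(b)$ vanishes; in the mixed case a direct inspection shows the two differences carry the same sign, so their product is non-negative. Summation then yields $\mathcal{E}(f_1,f_2) \geq 0$ and closes the argument. The main obstacle throughout is that, unlike in the proof for $r$, one cannot shift by a constant and split into signed parts; the truncation at level $f(y)$ is the right substitute precisely because it respects both positivity and the support constraint while still fitting into a Cauchy--Schwarz/energy-decomposition scheme.
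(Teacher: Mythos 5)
Your proof is correct and follows essentially the same route as the paper: the decomposition $f = f\wedge f(y) + (f-f(y))_+$ is exactly the paper's $f = f_{c,-}+f_{c,+}$ with $c=f(y)$, and the crux in both arguments is the cross-energy bound $\mathcal{E}(f_1,f_2)\geq 0$ feeding into the same Cauchy--Schwarz/AM--GM estimate. The only cosmetic difference is that you verify $\mathcal{E}(f_1,f_2)\geq 0$ by a direct pointwise sign check, whereas the paper rewrites it as $-\mathcal{E}\bigl((f-c1)_+,(f-c1)_-\bigr)\geq 0$ and quotes the known sign of $\mathcal{E}(g_+,g_-)$; these are equivalent.
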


\begin{remark}
  We are not aware of an appearance of this pseudometric earlier
in the literature. On a conceptual level bringing up this
 pseudometric can be seen as a key step in the present work.
\end{remark}

 \begin{proof} The inequality $r_\Omega \leq r$ is clear and
 directly gives that $r_\Omega$ take finite values. That $r_\Omega$
 is a pseudometric follows by a slight variant of the argument given for
 $r$. In fact, Case 1 and Case 2 go through without any changes.
 In the discussion of Case 3 the
 decomposition $f = f_+ - f_-$  has to be replaced by  the decomposition
 $$ f = f_{c,+} + f_{c,-}$$
 with $f_{c,+} := (f - c 1)_+$ and
 $f_{c,-} := \min\{ f, c\} =  c 1 - (f - c 1)_-$ for $c
 = f(y)$. It is then easy to see that $f_{c,+}$ and $f_{c,-}$ both
 are supported in $\Omega$ and
 $$\mathcal{E}(f_{c,+}, f_{c,-}) = \mathcal{E}( (f - c 1)_+, c 1 -
 (f - c 1)_-) = -\mathcal{E}( (f - c 1)_+, (f - c 1)_-) \geq 0$$
 holds. Given this  inequality the argument can be carried through as in the
 case of $r$. We omit the details.
 \end{proof}

As in the case of $r$ the following is immediate from the
construction.

\begin{coro}
The pseudometric $r_\Omega$ satisfies \eqref{TPIOmega} and for any other
pseudometric $\delta$ satisfying such
an inequality $r_\Omega \leq \delta$ holds.
\end{coro}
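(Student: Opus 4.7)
My plan is to extract both assertions directly from the explicit supremum formula
$$r_\Omega(x,y) = \sup\{|f(x) - f(y)|^2 \mid f \in \mathcal{D},\; f \geq 0,\; \mathcal{E}(f) \leq 1,\; \mbox{supp}(f) \subset \Omega\}$$
written out immediately before the preceding proposition. The substantive work---that $r_\Omega$ actually is a pseudometric---has already been done there, so the corollary should reduce to a near-tautological reading of this definition, exactly as the analogous corollary for $r$ did.

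For the first assertion, I would fix $x,y \in X$ and $f \in \mathcal{D}$ with $f\ge 0$ and $\mbox{supp}(f)\subset \Omega$. In the degenerate case $\mathcal{E}(f) = 0$ the standing connectivity assumption combined with the support condition on $f$ forces $f \equiv 0$, so \eqref{TPIOmega} holds trivially. Otherwise I rescale by setting $g := f/\mathcal{E}(f)^{1/2}$, which is again an admissible test function with $\mathcal{E}(g) = 1$; applying the sup formula to $g$ gives $|g(x) - g(y)|^2 \leq r_\Omega(x,y)$, and unwinding the scaling yields $|f(x) - f(y)|^2 \leq r_\Omega(x,y) \mathcal{E}(f)$, which is precisely \eqref{TPIOmega}.

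For the minimality claim, I let $\delta$ be any pseudometric satisfying \eqref{TPIOmega}. Fixing $x,y$ and picking any admissible $f$ in the supremum defining $r_\Omega(x,y)$, the hypothesis on $\delta$ immediately gives $|f(x) - f(y)|^2 \leq \delta(x,y)\mathcal{E}(f) \leq \delta(x,y)$. Taking the supremum over all such $f$ yields $r_\Omega(x,y) \leq \delta(x,y)$, as required. I do not expect any genuine obstacle here; the mildly delicate point is only the $\mathcal{E}(f) = 0$ case in the first assertion, which connectivity of $(X,b)$ dispatches at once. In short, the whole content of the corollary lies in recognising that the defining sup of $r_\Omega$ can be equivalently rephrased as the minimality property stated.
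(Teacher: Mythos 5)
Your argument is correct and is exactly what the paper has in mind: the corollary is stated as immediate from the construction of $r_\Omega$ as the minimal constant in \eqref{TPIOmega}, and your rescaling/supremum unwinding (including the careful $\mathcal{E}(f)=0$ case via connectivity) just makes that explicit. No difference in approach, only in the level of detail.
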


\begin{remark} As  already  mentioned in the introduction it would
equally be possible to take square roots in the Poincar\'{e}
inequalities we presented above. It turns out that the square roots
of the metrics $r$ and $r_\Omega$ are metrics again. Indeed, we can
consider the pseudometric $\varrho$ defined via
$$\varrho (x,y) :=\sup\{f(x) - f(y) \mid f\in\mathcal{D},
\mathcal{E}(f) \leq 1\}.$$ This pseudometric was introduced by
Davies \cite{Dav} in his study of non-commutative Dirichlet forms
and then used in a similar spirit in \cite{HKT} (see \cite{KLSW} as
well). From its very definition  we have for any $x,y\in X$
$$|f(x) - f(y)|^2 \leq \varrho(x,y)^2 \mathcal{E}(f)$$
for all $f\in \mathcal{D}$ and $\varrho (x,y)^2$ is the smallest
number with this property. So,   the pseudometric $\varrho$ is the
square root of   $r$, see e.g. \cite{KLSW}. Similar considerations
apply to $r_\Omega$ giving rise to the pseudometric
$\varrho_\Omega$. This means that all results presented below could
also be framed in terms of the corresponding metrics $\varrho$ and
$\varrho_\Omega$.
\end{remark}

\section{Global Poincar\'{e} inequality on $\mathcal{D}$}\label{Global}
In this section we investigate graphs satisfying a global
topological Poincar\'{e} inequality in that there exists a $c>0$
with
$$|f(x) - f(y)|^2 \leq c \mathcal{E}(f)$$
for all $f\in \mathcal{D}$ and all $x,y\in X$.

The relevance of $r$ in our context comes from the following
proposition.
\begin{prop}Let $(X,b)$ be a graph. Then, $$\diam_r (X) =
\sup\{ \|f\|_{\mathcal{V}}^2 \mid f\in \mathcal{D} \mbox{ with }
\mathcal{E}(f)\leq 1\},$$ where the value $\infty$ is possible.
\end{prop}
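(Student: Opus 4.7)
The plan is to prove both inequalities directly from the defining properties, using the fact that $r(x,y)$ admits the equivalent "sup" reformulation
$$r(x,y) = \sup\bigl\{ |f(x)-f(y)|^{2} \mid f\in\mathcal{D},\ \mathcal{E}(f)\leq 1\bigr\},$$
which follows from its "minimality" definition by scaling (noting that $\mathcal{E}(f)=0$ forces $f$ to be constant on the connected graph $(X,b)$, so that case is trivial).

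For the inequality $\sup\{\|f\|_{\mathcal{V}}^{2}\mid \mathcal{E}(f)\leq 1\}\leq \diam_{r}(X)$, I would take any $f\in\mathcal{D}$ with $\mathcal{E}(f)\leq 1$. Since $\sup f$ and $\inf f$ are (possibly infinite) suprema and infima over the countable set $X$, I can choose sequences $x_{n},y_{n}\in X$ with $f(x_{n})\to \sup f$ and $f(y_{n})\to \inf f$. The topological Poincar\'e inequality for $r$ gives $|f(x_{n})-f(y_{n})|^{2}\leq r(x_{n},y_{n})\mathcal{E}(f)\leq \diam_{r}(X)$, and passing to the limit yields $\|f\|_{\mathcal{V}}^{2}=(\sup f-\inf f)^{2}\leq \diam_{r}(X)$. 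Taking the supremum over admissible $f$ completes this direction. A side remark: if $\diam_{r}(X)=\infty$ there is nothing to check, and if $\diam_{r}(X)<\infty$ then in particular every $f$ with $\mathcal{E}(f)\leq 1$ is bounded, so the approximation argument is legitimate.

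For the reverse inequality $\diam_{r}(X)\leq \sup\{\|f\|_{\mathcal{V}}^{2}\mid \mathcal{E}(f)\leq 1\}$, I would fix arbitrary $x,y\in X$ and use the sup-reformulation of $r(x,y)$: for any $f\in\mathcal{D}$ with $\mathcal{E}(f)\leq 1$, the trivial bound $|f(x)-f(y)|\leq \sup f-\inf f=\|f\|_{\mathcal{V}}$ gives $|f(x)-f(y)|^{2}\leq \|f\|_{\mathcal{V}}^{2}\leq \sup\{\|g\|_{\mathcal{V}}^{2}\mid \mathcal{E}(g)\leq 1\}$. Taking the supremum over such $f$ bounds $r(x,y)$ by this same quantity, and then taking the supremum over $x,y\in X$ yields the claim.

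There is really no substantial obstacle here; the only thing to watch is that the "minimality" formulation of $r(x,y)$ must first be rewritten as a supremum in order to exploit the test-function $f$ freely, and that one properly handles the case $\diam_{r}(X)=\infty$ via the approximating sequences $x_{n},y_{n}$. Both steps are routine given the definitions.
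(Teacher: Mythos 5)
Your argument is correct and is essentially the paper's own proof: the paper simply writes $\diam_r(X)=\sup_{x,y}r(x,y)=\sup\{|f(x)-f(y)|^2 \mid x,y\in X,\ f\in\mathcal{D},\ \mathcal{E}(f)\leq 1\}=\sup\{\|f\|_{\mathcal{V}}^2\mid f\in\mathcal{D},\ \mathcal{E}(f)\leq 1\}$ as a single chain of equalities, using the same sup-reformulation of $r$ and the identity $\sup_{x,y}|f(x)-f(y)|^2=\|f\|_{\mathcal{V}}^2$. You merely unpack this into two inequalities with explicit approximating sequences, which adds care but no new ideas.
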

\begin{proof} We calculate
\begin{align*} \diam_r (X) &= \sup_{x,y} r(x,y)\\
&= \sup\{ |f(x)- f(y)|^2 \mid  x,y\in X, f\in \mathcal{D} \mbox{ with
} \mathcal{E}(f)\leq 1\}\\
&=\sup\{ \|f\|_{\mathcal{V}}^2 \mid f\in \mathcal{D} \mbox{ with }
\mathcal{E} (f) \leq 1\}.
\end{align*}
This finishes the proof.
\end{proof}

\begin{theorem}[Characterizing validity of a global  Poincar\'{e} inequality]
\label{thm-char} Let $(X,b)$ be a graph. Then, the following
statements are equivalent:

\begin{itemize}

\item[(i)] A  global  Poincar\'{e} type inequality holds, i.e.
there  exists a $c>0$ with
$$\|f\|_{\mathcal{V}}^2 \leq c \,
\mathcal{E}(f)$$
for all $f\in \mathcal{D}$ and $x,y\in X$.

\item[(ii)] The graph $(X,b)$ satisfies
$\mathcal{D}\subset \ell^\infty$.

\item[(iii)] $\diam_r (X) <\infty$.
\end{itemize}
\end{theorem}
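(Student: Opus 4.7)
The plan is to prove the cycle (iii) $\Rightarrow$ (i) $\Rightarrow$ (ii) $\Rightarrow$ (iii), where the last implication is the substantive step.

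\medskip
\noindent\textbf{(i) $\Leftrightarrow$ (iii).} This is essentially a reformulation of the proposition immediately preceding the theorem. Since
\[
\diam_r(X)=\sup\{\|f\|_{\mathcal V}^{2}\mid f\in\mathcal D,\ \mathcal E(f)\le 1\},
\]
a homogeneity/scaling argument (replacing $f$ by $f/\mathcal E(f)^{1/2}$ when $\mathcal E(f)\neq 0$, and noting that $\mathcal E(f)=0$ forces $f$ constant hence $\|f\|_{\mathcal V}=0$ by connectedness) shows that (i) holds with constant $c$ iff $\diam_r(X)\le c$. So (i) $\Leftrightarrow$ (iii), and the smallest admissible $c$ in (i) equals $\diam_r(X)$.

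\medskip
\noindent\textbf{(i) $\Rightarrow$ (ii).} For $f\in\mathcal D$ the inequality in (i) gives $\sup f-\inf f\le\sqrt{c\,\mathcal E(f)}<\infty$. Since $f$ is a real-valued function taking at least one finite value, finiteness of $\sup f-\inf f$ forces both $\sup f$ and $\inf f$ to be finite, hence $f\in\ell^\infty(X)$.

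\medskip
\noindent\textbf{(ii) $\Rightarrow$ (i).} This is the main step and will be obtained from a closed graph argument. Fix a base point $o\in X$ and set
\[
\mathcal D_o:=\{f\in\mathcal D\mid f(o)=0\}.
\]
On $\mathcal D_o$ the map $f\mapsto\mathcal E(f)^{1/2}$ is a genuine norm (by connectedness), and I claim the normed space $(\mathcal D_o,\mathcal E^{1/2})$ is complete. Indeed, for a Cauchy sequence $(f_n)$ Proposition~\ref{prop-distance} yields
\[
|f_n(x)-f_m(x)|^{2}=|(f_n-f_m)(x)-(f_n-f_m)(o)|^{2}\le d(x,o)\,\mathcal E(f_n-f_m),
\]
so $f_n\to f$ pointwise for some $f$ with $f(o)=0$; Fatou's lemma applied to the sum defining $\mathcal E$ then gives $\mathcal E(f-f_n)\le\liminf_m\mathcal E(f_m-f_n)$, so $f\in\mathcal D_o$ and $f_n\to f$ in $\mathcal E^{1/2}$.

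Under (ii) the inclusion $\iota:(\mathcal D_o,\mathcal E^{1/2})\to(\ell^\infty(X),\|\cdot\|_\infty)$ is well defined. It is closed: if $f_n\to f$ in $\mathcal E^{1/2}$ and $\iota(f_n)\to g$ in $\ell^\infty$, then as above $f_n\to f$ pointwise via Proposition~\ref{prop-distance}, while uniform convergence also forces $f_n\to g$ pointwise; thus $f=g$. Since both spaces are Banach, the closed graph theorem yields a constant $C$ with
\[
\|f\|_\infty\le C\,\mathcal E(f)^{1/2}\qquad(f\in\mathcal D_o).
\]
For arbitrary $f\in\mathcal D$ apply this to $f-f(o)\cdot 1\in\mathcal D_o$ and use $\|f\|_{\mathcal V}=\|f-f(o)\cdot 1\|_{\mathcal V}\le 2\|f-f(o)\cdot 1\|_\infty$ to conclude $\|f\|_{\mathcal V}^{2}\le 4C^{2}\mathcal E(f)$, which is (i).

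\medskip
The only nontrivial point is (ii) $\Rightarrow$ (i); the obstacle is providing a Banach space structure in which to apply the closed graph theorem, and this is handled by verifying completeness of $(\mathcal D_o,\mathcal E^{1/2})$ via Proposition~\ref{prop-distance} and Fatou. The other implications are either a direct scaling rewrite of the preceding proposition or immediate from the definition of $\|\cdot\|_{\mathcal V}$.
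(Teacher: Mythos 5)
Your proof is correct and follows essentially the same route as the paper: a closed graph argument for (ii) $\Rightarrow$ (i), with Proposition~\ref{prop-distance} supplying the pointwise convergence needed both for closedness of the graph and for completeness of the energy space, and with the other two implications read off from the preceding proposition and the definition of $\|\cdot\|_{\mathcal V}$. Working with the subspace $\mathcal D_o=\{f\in\mathcal D\mid f(o)=0\}$ instead of the quotient $\mathcal D/\lin\{1\}$ is only a cosmetic reformulation, since the paper's own lifting $\lambda[f](x)=f(x)-f(o)$ identifies the two; your Fatou argument fills in the completeness step that the paper delegates to \cite{Sch} and sketches at the end of its proof.
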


\begin{proof}
(i) $\Longleftrightarrow$ (iii) is clear from the previous
proposition.

(i) $\Longrightarrow$ (ii): This is clear as (i) implies
$$\|f\|_{\mathcal{V}} \leq c \mathcal{E}(f) <
\infty$$ for all $f\in\mathcal{D}$.

(ii) $\Longrightarrow$ (i) follows from a closed graph argument.

Consider the closed subspace $\lin\{1\} = \{ \lambda 1 \mid \lambda \in\RR\}$ of
$\ell^{\infty} (X)$.
Then $\ell^{\infty} (X) /\lin\{1\}$ is a Banach space with respect to
$$
\|f +
\lin\{1\}\|_{\mathcal{V}}:=\sup f - \inf f=
 2\inf_{c\in \RR} \{ \|f - c 1\|_\infty\} ,
$$
since the latter norm agrees with
 the quotient norm $\|f +
\lin\{1\}\|_{\ell^{\infty} (X) /\lin\{1\}}$.

Moreover it is not
hard to see that $\mathcal{D}/\lin\{1\}$ is a Banach space with the norm  $\|f + \lin\{1\} \|_{\mathcal{E}} := \mathcal{E}(f)^\frac12$. This follows e.g. from Proposition~1.6 in \cite{Sch},  whose proof carries over to our setting; see as well the outline of proof below.

By assumption the identity maps
$$
id:\mathcal{D}/\lin\{1\} \to \ell^{\infty} (X) /\lin\{1\} .
$$
Thus, the desired estimate follows from the closed
graph theorem once we can show that $id$ is closed, so that it remains to verify
that
\begin{align}
 [f_n]\stackrel{\|\cdot\|_\mathcal{V}}{\to}[f]\mbox{  and  }
[f_n]\stackrel{\|\cdot\|_\en}{\to}[g]\Rightarrow [f]=[g], \tag{$\star$} \label{star}
\end{align}
where we abbreviate $[f]:=f+\lin\{1\}$.
To this end we fix $o\in X$ and consider
$$
\lambda:\ell^{\infty} (X) /\lin\{1\}\to \ell^{\infty} (X), \lambda[f](x):=
f(x)-f(o) .
$$
Note that $\lambda[f]$ is well--defined and $\lambda$ is a lifting of the
quotient map, i.e.,
$[\lambda[f]]=[f]$. The remaining claim\eqref{star} will be settled once we see
that
for $\bullet\in\{ \mathcal{V}, \en\}$,
\begin{align*}
 [f_n]\stackrel{\|\cdot\|_\bullet}{\to}[f]\Rightarrow
\lambda[f_n]\to\lambda[f]\mbox{  pointwise},
\tag{$\star \star$} \label{2star}
\end{align*}
and this is what we see next. Clearly,  \eqref{2star} holds for
$\bullet=\mathcal{V}$ since $\lambda$
is continuous. For  $\bullet=\en$ we can use Proposition \ref{prop-distance}. In
fact, let $x\in X$ and
choose $y=o$. Then the inequality in the latter Proposition gives
$$
|\lambda[f_n](x)-\lambda[f](x)|^2\le d(x,o)\en(f_n-f) ,
$$
and this finishes the proof.

The lifting $\lambda$ can be used to prove the completeness of  $\mathcal{D}/\lin\{1\}$ with respect to  
$\|\cdot\|_\en$ as follows. By what we just proved, for any Cauchy sequence $([f_n])$ in the latter space, the functions
$ \lambda[f_n]$ converge pointwise to some $f$. A standard completeness proof settles that $[f]$ is the desired
limit.  
\end{proof}

\begin{remarks} \label{summationsremark}
\begin{enumerate}[(a)]
 \item The notion of \textit{canonically
compactifiable graph} is put forward in \cite{GHKLW} to denote
graphs satisfying (ii) and the additional summability condition
\begin{align*}
 \sum_{y\in X}b(x,y)<\infty \tag{S} \label{S}
\end{align*}
 for all $x\in X$.  For such
graphs the equivalence between (ii) and (iii) is already contained
in \cite{GHKLW}.
\item Whenever a probability measure $m$ is given, a global
Poincar\'{e} inequality clearly implies an
$\ell^2$-Poincar\'{e} inequality of the form
$$ \|f - m(f) 1\|_2^2 \leq c \mathcal{E} (f)$$
with $m(f) = \sum f(x) m(x)$ and $\|\cdot\|_2$ the norm on $\ell^2
(X,m)$ (compare the discussion in the introduction). The validity of
such an $\ell^2$-Poincar\'{e} inequality for canonically
compactifiable graphs has recently been observed in \cite{KS}.
\end{enumerate}
\end{remarks}

\begin{coro}\label{corollary:pure point spectrum} If $(X,b)$ satisfies a global
Poincar\'{e} inequality and
$m$ is a probability measure on $X$, then $H^{(N)}_m$ has pure point
spectrum.
\end{coro}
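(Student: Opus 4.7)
The plan is to reduce to Lemma \ref{trace} after carefully accounting for the kernel of the form. By Theorem \ref{thm-char}, the global Poincar\'{e} inequality is equivalent to $\mathcal{D}\subseteq\ell^\infty(X)$, and since $m$ is a probability measure one has $\ell^\infty(X)\subseteq\ell^2(X,m)$. Consequently $\mathcal{D}\cap\ell^2(X,m)=\mathcal{D}$, so the form domain of $\mathcal{E}^{(N)}$ contains the constant functions, which lie in the kernel of the form.

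This prevents a direct application of Lemma \ref{trace}. The remedy is to decompose
\[ \ell^2(X,m)=\lin\{1\}\oplus V,\qquad V:=\lin\{1\}^\perp. \]
Both summands are invariant under $H^{(N)}_m$: the restriction to $\lin\{1\}$ is the zero operator (with pure point spectrum trivially), while on $V$ one obtains the selfadjoint operator associated to the closed form $\mathcal{E}^{(N)}$ restricted to $\mathcal{D}\cap V$. It thus suffices to establish pure point spectrum on $V$.

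For any $f\in\mathcal{D}\cap V$ the mean-zero condition $m(f)=0$, together with $m$ being a probability measure, forces $\inf f\le 0\le\sup f$, so that
\[ \|f\|_\infty \le \sup f-\inf f = \|f\|_\mathcal{V}. \]
Combining with the global Poincar\'{e} inequality yields $\|f\|_\infty^2\le c\,\mathcal{E}(f)$ throughout $\mathcal{D}\cap V$. This is precisely the type of bound Lemma \ref{trace} is designed to exploit: applied to the restricted closed form on $V$ and the finite measure $m$, it delivers pure point spectrum for $H^{(N)}_m|_V$. Reassembling the two summands completes the argument.

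The main technical obstacle is that constants belong to the kernel of $\mathcal{E}^{(N)}$, so the Poincar\'{e} inequality in its raw form only controls oscillations, not the supremum norm. Passing to the orthogonal complement of the constants upgrades the oscillation bound to the sup bound required by Lemma \ref{trace}; once this is done the corollary is immediate.
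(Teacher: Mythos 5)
Your argument is correct and follows essentially the same route as the paper: the paper likewise observes that $1$ is an eigenfunction to the eigenvalue $0$ and applies Lemma~\ref{trace} to the restriction of $\mathcal{E}^{(N)}$ to the orthogonal complement of the constants, which is exactly your decomposition $\ell^2(X,m)=\lin\{1\}\oplus V$. You merely supply the details the paper leaves implicit, namely that mean-zero functions satisfy $\|f\|_\infty\le\|f\|_{\mathcal{V}}$ so that the oscillation bound upgrades to the sup-norm bound required by the lemma.
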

\begin{proof} Pure point spectrum is   well known for canonically compactifiable
graphs with finite measures \cite{GHKLW}. Hence, the corollary
follows immediately from the previous theorem in the case of locally
summable $b$. For the general case we note that the constant
function  $1$ is clearly an eigenfunction to the eigenvalue $0$. We
can then conclude the statement from Lemma \ref{trace} applied to
the restriction of $\mathcal{E}^{(N)}$ to the orthogonal complement
of $1$.
\end{proof}

We aim at studying the best constant $c$ in the Poincar\'{e}
inequality. More specifically, whenever $(X,b)$ satisfies
$$\|f\|_\mathcal{V}^2 \leq c \mathcal{E}(f)$$
for all $f\in\mathcal{E}$,  we can consider the infimum, which is
actually a minimum, over all possible such $c$. This minimum will be
denoted by $c_P$. Here come simple geometric and functional analytic
characterisations of this constant.

\begin{prop}[Geometric and functional analytic  interpretation of
$c_P$] \label{prop-char-geom-fa} The equalities
\begin{align*}c_P &= \diam_r (X)\\
&= (\mbox{Norm of the embedding $j: (\mathcal{D}/\lin \{1\}
,\|\cdot\|_\mathcal{E})\longrightarrow (\ell^\infty (X)/ \lin\{1\},
\|\cdot\|_{\mathcal{V}}) $})^2 \end{align*}
hold.
\end{prop}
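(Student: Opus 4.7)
The plan is to show both equalities by unfolding definitions and invoking the preceding proposition which identifies
$$\diam_r(X)=\sup\{\|f\|_\mathcal{V}^2 \mid f\in\mathcal{D},\ \mathcal{E}(f)\leq 1\}.$$
Neither equality should require substantial new ideas beyond homogeneity and the observation that $\|\cdot\|_\mathcal{V}$ and $\mathcal{E}$ are both constant on cosets of $\lin\{1\}$, so they descend to well-defined quantities on $\mathcal{D}/\lin\{1\}$.

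For $c_P=\diam_r(X)$: by definition $c_P$ is the smallest $c\geq 0$ such that $\|f\|_\mathcal{V}^2\leq c\,\mathcal{E}(f)$ for every $f\in\mathcal{D}$, which amounts to
$$c_P=\sup\bigl\{\|f\|_\mathcal{V}^2/\mathcal{E}(f)\mid f\in\mathcal{D},\ \mathcal{E}(f)>0\bigr\}.$$
Since $\|\lambda f\|_\mathcal{V}^2=\lambda^2\|f\|_\mathcal{V}^2$ and $\mathcal{E}(\lambda f)=\lambda^2\mathcal{E}(f)$ for $\lambda\in\mathbb{R}$, the quotient is invariant under nonzero rescaling, so we may restrict the supremum to functions with $\mathcal{E}(f)=1$ (or, equivalently, $\mathcal{E}(f)\leq 1$, noting that $f=0$ contributes nothing). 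This is precisely the sup appearing in the preceding proposition, hence equals $\diam_r(X)$. One should also remark briefly that this value is an admissible $c$, so the infimum defining $c_P$ is a minimum.

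For the equality with the squared embedding norm: since $\|\cdot\|_\mathcal{V}$ and $\mathcal{E}$ vanish on constants, the quotient norms satisfy $\|[f]\|_\mathcal{V}=\|f\|_\mathcal{V}$ and $\|[f]\|_\mathcal{E}=\mathcal{E}(f)^{1/2}$ for any representative $f$. By the very definition of the operator norm of a bounded linear map,
$$\|j\|=\sup\bigl\{\|[f]\|_\mathcal{V}/\|[f]\|_\mathcal{E}\mid [f]\neq [0]\bigr\}=\sup\bigl\{\|f\|_\mathcal{V}/\mathcal{E}(f)^{1/2}\mid \mathcal{E}(f)>0\bigr\},$$
and squaring gives exactly the expression for $c_P$ obtained above. (That $j$ is well-defined as a map of normed spaces, and bounded in the case of interest, is Theorem~\ref{thm-char}.)

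There is no real obstacle here; the only delicate point is being careful that the $\|\cdot\|_\mathcal{V}$-norm on $\ell^\infty(X)/\lin\{1\}$ used in the statement is the quotient norm, which was verified in the proof of Theorem~\ref{thm-char}, and that the analogous identification $\|[f]\|_\mathcal{E}=\mathcal{E}(f)^{1/2}$ on $\mathcal{D}/\lin\{1\}$ holds because $\mathcal{E}$ is a nondegenerate quadratic form on this quotient (again used in that proof). Given these, the proposition reduces to two lines of homogeneity and unfolding of the definition of operator norm.
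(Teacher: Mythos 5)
Your argument is correct and matches the paper's approach: the authors simply declare the proposition ``immediate from the definitions and the proof of the previous theorem,'' and your write-up supplies exactly the unfolding they have in mind (homogeneity to pass between the ratio form of $c_P$ and the constrained supremum $\sup\{\|f\|_{\mathcal{V}}^2 \mid \mathcal{E}(f)\leq 1\}$ from the preceding proposition, plus the identification of the operator norm of $j$). The only point worth making explicit is that functions with $\mathcal{E}(f)=0$ are constant by connectedness, so they neither obstruct the inequality nor contribute to the supremum; you gesture at this and it is indeed harmless.
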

\begin{proof} This is immediate from the definitions and the proof
of the previous theorem.
\end{proof}

\begin{remark}
 Clearly, any finite graph has finite diameter with
respect to $r$. However, it is not hard to construct infinite graphs
with finite diameter. In fact, it is easy to see that any infinite
graph with
$$\kappa:=\sum_{x,y : b(x,y)>0} \frac{1}{b(x,y)} <\infty$$
 has finite diameter (bounded by $\kappa$) with respect to
$d$ see \cite{GHKLW} for further discussion as well). Due to $r \leq
d$ it has then finite diameter with respect to $r$ as well.
\end{remark}

Our second main result gives a formula for $c_P$. We need the following  proposition as preparation.

\begin{prop}\label{proposition:inequality} Let $X$ be a countable set and $m$ a
probability
measure on $X$.
\begin{enumerate}[(a)]
 \item Then,
$$ \|f\|_2^2 \leq \frac{1}{4}  \|f\|_{\mathcal{V}}^2$$
for any  bounded $f$ with zero mean i.e. with  $\sum_x f(x) m(x) =
0$.
\item If  $f$ is bounded and satisfies  furthermore $\sup f = - \inf
f$ we even have
$$\sup_m \|f\|_2^2 = \frac{1}{4} \|f\|_{\mathcal{V}}^2,$$
where the supremum is taken over all probability measures with full
support such that $f$ has zero mean.
\end{enumerate}
\end{prop}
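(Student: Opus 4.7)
The plan is to establish (a) by a direct pointwise inequality exploiting the two-sided bounds on $f$, and then derive (b) by combining (a) with an explicit construction of measures approaching the bound.

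For part (a), I would set $A := \sup f$ and $B := \inf f$. Since $m$ is a probability measure with $\sum_x f(x) m(x) = 0$, the zero mean forces $B \leq 0 \leq A$. The key step is the pointwise inequality $(A - f(x))(f(x) - B) \geq 0$, which rearranges to $f(x)^2 \leq (A+B) f(x) - AB$. Integrating this against $m$ and using the zero-mean hypothesis yields $\|f\|_2^2 \leq -AB$. The elementary estimate $(A+B)^2 \geq 0$ is equivalent to $-AB \leq (A-B)^2/4 = \|f\|_{\mathcal{V}}^2/4$, which finishes (a).

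For part (b), under the assumption $A := \sup f = -\inf f$ the upper bound $\|f\|_2^2 \leq A^2 = \|f\|_{\mathcal{V}}^2/4$ is immediate from (a). For the sharpness I would fix $\varepsilon > 0$, pick $x_+, x_- \in X$ with $f(x_+) > A - \varepsilon$ and $f(x_-) < -A + \varepsilon$, and construct a probability measure $m = m_{\varepsilon,\mu}$ of full support whose mass is concentrated almost entirely on $\{x_+, x_-\}$. Concretely: spread a small total mass $\mu > 0$ over $X \setminus \{x_+, x_-\}$ with strictly positive weights (e.g.\ via a geometric sequence along an enumeration of $X$), compute the resulting partial mean $c := \sum_{z \neq x_\pm} f(z) m(z)$, and then place the two remaining masses at $x_+, x_-$ so that they sum to $1 - \mu$ and cancel $c$, making the total $f$-mean zero. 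Since $|c| \leq A\mu$, these two weights both tend to $1/2$ as $\mu \to 0$, while $|f| \leq A$ bounds the contribution of the spread mass to $\|f\|_2^2$ by $A^2 \mu$. Passing first $\mu \to 0$ and then $\varepsilon \to 0$ gives $\|f\|_2^2 \to A^2$, establishing the reverse inequality.

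The main obstacle is precisely the full-support requirement in (b): a pure two-point measure on $\{x_+, x_-\}$ would attain the supremum trivially, but strict positivity everywhere forces a perturbation that must be controlled. One has to verify that the candidate weights at $x_\pm$ remain admissible in $(0, 1-\mu)$, which is automatic once $\varepsilon$ is small relative to $A$, and that the correction to $\|f\|_2^2$ is of order $O(\mu + \varepsilon)$; both checks are straightforward consequences of $|f| \leq A$ but constitute the only genuinely non-cosmetic content of the argument.
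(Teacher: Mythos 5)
Your proposal is correct. For part (a) you take a genuinely different route from the paper: the paper decomposes $f=f_+-f_-$, introduces $S=\sup f_+$, $I=\sup f_-$, $T=\sum_{X_+}f_+m=\sum_{X_-}f_-m$ and the masses $p=m(X_+)$, $q=m(X_-)$, and chains the estimates $T^2\le SIpq$, $pq\le \tfrac14$ and AM--GM to get $\|f\|_2^2\le (S+I)T\le\tfrac14(S+I)^2$; you instead integrate the pointwise inequality $(A-f(x))(f(x)-B)\ge 0$ against $m$ to get the variance bound $\|f\|_2^2\le -AB$ and then use $(A+B)^2\ge 0$. Your argument is shorter, avoids the positive/negative-part bookkeeping entirely, and yields the sharper intermediate bound $\|f\|_2^2\le(\sup f)(-\inf f)$ (a Bhatia--Davis type inequality), which degenerates to $\tfrac14\|f\|_{\mathcal V}^2$ exactly in the balanced case $\sup f=-\inf f$ relevant for (b); the paper's route, by contrast, makes visible where the mass constraint $p+q\le 1$ enters, which is the mechanism reused later (e.g.\ the refinement $\|g\|_2^2\le \tfrac14 m(X\setminus F)\|g\|_{\mathcal V}^2$ in the higher-eigenvalue section). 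For part (b) you follow essentially the same near-two-point concentration the paper sketches, but you supply the details it explicitly omits (admissibility of the two weights and the $O(\mu+\varepsilon)$ error control), so this part is a completed version of the paper's argument rather than a new one.
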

\begin{proof} (a)  We decompose
$f$ into positive and negative part $f = f_+ - f_-$ and set
$$S:= \sup_{x\in X}  f_+ \: \mbox{and}\: I:=\sup_{x\in X} f_- .$$
Hence, $S$ is the supremum of $f$ and $I$ is the negative Infimum.
So, we clearly have
$$\|f\|_{\mathcal{V}}^2 =(S + I)^2.$$
Set $$X_+ :=\{x\in X \mid f(x) >0\} \;\mbox{and}\: X_-:=\{x\in X \mid f(x)
< 0\}.$$ Then, the condition on the mean of $f$ can be written as
$$ \sum_{x\in X_+} f_+ (x) m(x) = \sum_{x\in X_-} f_- (x) m(x)=:T$$
Then, with $p:=\sum_{x\in X_+} m(x)$ and $q :=\sum_{x\in X_-} m(x)$
we can estimate $T$ via
$$0 \leq T\leq S p \;\: \mbox{ as well as}\;\: 0 \leq T\leq
I q$$ with
$$p + q \leq 1$$
(as $m$ is a probability measure and $f$ may well take the value
zero). From these estimates we obtain
$$T^2 \leq S I p q$$
and after taking square roots
$$T \leq \sqrt{SI} \sqrt{p q}.$$
Now, due to $p + q \leq 1$ we can easily infer $p q\leq
\frac{1}{4}$. Combined with the inequality between arithmetic and
geometric mean we infer
\begin{align}
  T \leq \frac{1}{4} (S + I). \tag{$\ast$} \label{*}
\end{align}
After this preparation we now easily finish the proof:
\begin{eqnarray*}
\|f\|_2^2 &=& \sum_{x\in X_+} f_+ (x)^2 m(x) + \sum_{x\in X_-}
f_-(x)^2
m(x)\\
&\leq & S \sum_{x\in X_+} f_+(x) m(x) + I \sum_{x\in X_-} f_-(x)
m(x)\\
(\mbox{definition of $T$})\;\: &=& (S+ I) T\\
\text{\eqref{*}}\;\: &\leq & \frac{1}{4} (S+ I)^2\\
&=& \frac{1}{4} \|f\|_{\mathcal{V}}^2.
\end{eqnarray*} This shows (a).

(b) The inequality $\leq$ is clear from (a). Thus, it remains to
show $\geq$. Assume first that $f$ attains both its maximum and its
minimum in, say, $x_M$ and $x_m$. Consider now the measure $m$ which
gives the mass $1/2$ to $x_m$ and $x_M$. Then, a short calculation
shows that $ \|f\|_2^2 =\frac{1}{4} \|f\|_{\mathcal{V}}$. The
problem with this argument is that in general we do not know that
$f$ attains maximum and minimum and that $m$ will not be a supported
on the whole of $X$. So, we will have to modify the argument
slightly: We chose $x_m$ and $x_M$ such that the values are very
close to the supremum and infimum of $f$ and we then distribute a
very small mass on the remaining points, calculate the mean  on
these remaining points (which will be close to zero due to the
smallness of the mass on the remaining points) and now give almost
the mass $1/2$ and $1/2$ to $x_M$ and $x_m$.  We omit more details.
\end{proof}

\begin{theorem}[Computing $c_P$ via $\ell^2$-theory]
\label{thm-computing} Let $(X,b)$ satisfy a global  Poincar\'{e}
inequality. Then, the best possible constant $c_P$ satisfies
$$\frac{4}{c_P} =  \inf_m \{\mbox{second lowest eigenvalue of $H^{(N)}_m$} \},$$
where the infimum runs over all probability measures of full
support.
\end{theorem}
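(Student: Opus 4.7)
The plan is to prove the identity by establishing the two inequalities separately, with both directions reduced to the elementary bound between $\|\cdot\|_2$ and $\|\cdot\|_{\mathcal{V}}$ provided by Proposition~\ref{proposition:inequality}. By Corollary~\ref{corollary:pure point spectrum} the operator $H^{(N)}_m$ has pure point spectrum, and the constant function $1$ is always an eigenfunction to $0$; hence, by the min-max principle, the second lowest eigenvalue of $H^{(N)}_m$ equals
$$\lambda_1(m) = \inf\left\{\frac{\mathcal{E}(f)}{\|f\|_2^2} \;\Big|\; f\in\mathcal{D},\; f\neq 0,\; \sum_{x\in X} f(x) m(x)=0\right\}.$$
Note that the Poincar\'e inequality together with Theorem~\ref{thm-char} ensures $\mathcal{D}\subset\ell^\infty(X)\subset\ell^2(X,m)$ for every probability measure $m$, so this variational characterisation is valid and the $f$'s above are admissible.

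For the inequality $\inf_m \lambda_1(m)\geq 4/c_P$, I would fix any probability measure $m$ of full support and any $f\in\mathcal{D}$ with $\sum_x f(x)m(x)=0$. Part (a) of Proposition~\ref{proposition:inequality} directly gives
$$\|f\|_2^2 \leq \tfrac{1}{4}\|f\|_{\mathcal{V}}^2 \leq \tfrac{c_P}{4}\,\mathcal{E}(f),$$
so that $\mathcal{E}(f)/\|f\|_2^2\geq 4/c_P$; taking the infimum over $f$ and then over $m$ yields the desired bound.

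For the reverse inequality I would argue by approximation. Given $\varepsilon>0$, choose $f\in\mathcal{D}$ with $\mathcal{E}(f)\leq 1$ and $\|f\|_{\mathcal{V}}^2\geq c_P-\varepsilon$, which is possible by the definition of $c_P$ together with Proposition~\ref{prop-char-geom-fa}. Replacing $f$ by $f-\tfrac{1}{2}(\sup f+\inf f)\mathbf{1}$ (which affects neither $\mathcal{E}(f)$ nor $\|f\|_{\mathcal{V}}$) we may assume $\sup f = -\inf f$. Part (b) of Proposition~\ref{proposition:inequality} then asserts the existence of a probability measure $m$ of full support for which $f$ has zero mean and $\|f\|_2^2$ is arbitrarily close to $\tfrac{1}{4}\|f\|_{\mathcal{V}}^2$. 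Plugging such an $f$ into the variational formula for $\lambda_1(m)$ gives
$$\lambda_1(m)\leq \frac{\mathcal{E}(f)}{\|f\|_2^2} \leq \frac{1}{\tfrac{1}{4}(c_P-\varepsilon) - \varepsilon'},$$
which can be made as close to $4/c_P$ as we wish. Taking infimum over $m$ shows $\inf_m\lambda_1(m)\leq 4/c_P$.

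The main obstacle is the second direction, specifically the need to realise the supremum in the definition of $c_P$ \emph{together with} the supremum from Proposition~\ref{proposition:inequality}(b) by a single pair $(f,m)$ with $m$ of full support. The normalisation step $\sup f = -\inf f$ is what decouples these two optimisations: it reduces the bound $\|f\|_2^2\leq \tfrac{1}{4}\|f\|_{\mathcal{V}}^2$ from a strict inequality to an equality that can be essentially saturated by concentrating most of the mass of $m$ on (near-)extremal points of $f$, and then spreading a vanishingly small amount of mass over $X$ to guarantee full support and the zero-mean constraint. No other step is delicate; everything else is a direct application of the preceding results.
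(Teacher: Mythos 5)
Your argument is correct and follows essentially the same route as the paper's own proof: Proposition~\ref{proposition:inequality}(a) gives the lower bound on $\lambda_1(m)$ for every admissible $m$, and part (b) combined with the normalisation $\sup f=-\inf f$ of a near-optimal $f$ yields the matching upper bound. The only cosmetic difference is that you pick $f$ with an additive error in $\|f\|_{\mathcal V}^2$ at fixed energy, whereas the paper uses a multiplicative $(1+\varepsilon)$ factor; both are equivalent.
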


\begin{remark}
 By Corollary \ref{corollary:pure point spectrum}  above the
operator $H^{(N)}_m $ has
pure point spectrum. Clearly, the infimum of the spectrum is zero
with the constant functions being eigenfunctions.
\end{remark}

\begin{proof} We denote the second lowest eigenvalue of $H^{(N)}_m$
by $\lambda_1$ (and thereby suppress the dependence on $m$).

 We first show ``$\leq$'': Let $m$ be an arbitrary
probability measure on $X$ with full support. Then, the first
eigenvalue of $H^{(N)}_m$ is  $0$ with the constant functions as
eigenfunctions. Consider now an arbitrary $f\in \mathcal{D}$ with $f
\perp 1$. Hence, $f$ has zero mean. So, from (a) of the previous
proposition we infer
$$\mathcal{E}(f) \geq \frac{1}{c_P} \|f\|_{\mathcal{V}}^2 \geq
\frac{4}{c_P} \|f\|_2^2.$$ As $f\in \mathcal{D}$ with $f\perp 1$ was
arbitrary we obtain $\lambda_1 \geq \frac{4}{c_P}.$

We now turn to showing ``$\geq$'': Let $\varepsilon >0$ be
arbitrary. Choose an $f\in \mathcal{D}$ with $$\mathcal{E}(f)  <
\frac{1 +\varepsilon}{c_P} \|f\|_{\mathcal{V}}^2.$$

Clearly we can replace $f$ by $f - s 1$ for any $s\in\RR$ without
changing the last inequality. In particular, we can assume without
loss of generality $\sup f = - \inf f$. Now, by (b) of the previous
proposition we can chose a probability measure $m$ of full support
on $X$ such that $f$ has zero mean with respect to this measure and
$$(1+\varepsilon) \|f\|_2^2 \geq \frac{1}{4}
\|f\|_{\mathcal{V}}^2$$ holds.  Combining these estimates we infer
$$\mathcal{E}(f) < \frac{4}{c_P} (1+\varepsilon)^2 \|f\|_2^2.$$
As $f$ has zero mean we have $f\perp 1$ and the last inequality
gives
$$\lambda_1 \leq \frac{4}{c_P} (1+\varepsilon)^2.$$
As $\varepsilon >0$ was arbitrary the desired statement follows.
\end{proof}

\begin{remark}
 The constant $c_P$ appears in the denominator in
the preceding theorem. This is due to the fact that we have chosen
to write the original inequality in the form $\|f\|_{\mathcal{V}}^2
\leq c \mathcal{E}(f)$ with the constant appearing on the right hand
side.  Of course, one could do differently. However, it seems that
it is usual to have the constant on the right hand side and,
moreover, as shown above, it is possible to give a direct geometric
meaning of it in form of a diameter.
\end{remark}

\begin{coro}\label{cor-textbook} Assume $\diam_d (X)< \infty$. Then,
$(X,b)$ satisfies the equivalent conditions of Theorem
\ref{thm-char} and for any probability measure $m$ on $X$ of full
support, the second lowest eigenvalue $\lambda_1$ of $H^{(N)}$ is
bounded below by
$$\lambda_1 \geq \frac{4}{\diam_d (X)}.$$
\end{coro}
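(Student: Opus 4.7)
The plan is to chain together the previously established results. The key observation is that by Proposition~\ref{proposition:r pseudometric} we have $r \leq d$ as pseudometrics, and hence $\diam_r(X) \leq \diam_d(X)$.

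First I would verify that the hypotheses of Theorem~\ref{thm-char} are satisfied: from $\diam_d(X) < \infty$ and $r \leq d$ we immediately get $\diam_r(X) < \infty$, which is condition (iii) of that theorem. Hence $(X,b)$ satisfies the equivalent conditions, in particular the global Poincar\'{e} inequality with best constant $c_P$ finite.

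Next I would invoke Proposition~\ref{prop-char-geom-fa}, which identifies $c_P = \diam_r(X)$. Combined with the monotonicity $\diam_r(X) \leq \diam_d(X)$ this yields $c_P \leq \diam_d(X)$, so $4/c_P \geq 4/\diam_d(X)$.

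Finally, given any probability measure $m$ of full support, Theorem~\ref{thm-computing} provides the variational lower bound $\lambda_1 \geq 4/c_P$ on the second lowest eigenvalue of $H^{(N)}_m$ (indeed the infimum over all such $m$ equals $4/c_P$, so certainly every individual $\lambda_1$ dominates this infimum). Stringing the two inequalities together gives $\lambda_1 \geq 4/c_P \geq 4/\diam_d(X)$, which is the stated bound. There is no real obstacle here: the corollary is a direct packaging of Proposition~\ref{proposition:r pseudometric}, Proposition~\ref{prop-char-geom-fa}, and Theorem~\ref{thm-computing}, with the only substantive observation being that passing from $r$ to the (potentially larger, but easier to estimate) pseudometric $d$ merely weakens the bound.
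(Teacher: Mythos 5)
Your argument is correct and coincides with the paper's own proof: both use $r\le d$ to obtain $\diam_r(X)\le\diam_d(X)<\infty$ (giving condition (iii) of Theorem~\ref{thm-char}), then $c_P=\diam_r(X)\le\diam_d(X)$ from Proposition~\ref{prop-char-geom-fa}, and finally the bound $\lambda_1\ge 4/c_P$ from Theorem~\ref{thm-computing}. Nothing further is needed.
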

\begin{proof} As $r \leq
d$ the assumption implies $\diam_r (X)\leq \diam_d (X) <\infty$ and
the equivalent assumptions of Theorem \ref{thm-char} are satisfied.
Moreover, from Proposition \ref{prop-char-geom-fa} we have
$$c_P = \diam_r (X) \leq \diam_d (X).$$ Given this the last
statement follows from Theorem \ref{thm-computing}.
\end{proof}

\begin{remark}
 That a graph with $\diam_d (X)< \infty$
(satisfying the additional summability condition \eqref{S} given in Remark
\ref{summationsremark} (a)) is canonically compactifiable is already
contained in \cite{GHKLW}. The main statement of the corollary is
the bound for the lowest eigenvalues. Even in the case of finite
combinatorial graphs with combinatorial distances $d_{\mbox{comb}}$
this bound  is stronger (by the factor $4$) than what can be found
in the textbooks, e.g. \cite{Chung}.
\end{remark}

\begin{remark}[Poincar\'{e} via $\varrho$] It is instructive  to
interpret the Poincar\'{e} inequality via the pseudometric
$\varrho$. As
 noted already we have $\varrho^2 = r$. So, the preceding
considerations imply $c_P = (\diam_\varrho (X))^2$.  It is now
natural to define the \textit{radius} of $X$ with respect to
$\varrho$ as $R(\varrho) :=\frac{\diam_\varrho (X)}{2}$. Given this
we can interpret the quantity $\frac{4}{c_P}$ appearing in the
second theorem of the last section as
$$\lambda_1 \geq \frac{4}{c_P} = \frac{1}{R(\varrho)^2}.$$
\end{remark}

\section{Global Poincar\'{e} inequality for $\cece
(X)$}\label{Global-c-c}

In this section we assume that $X$ is infinite and  $\cece
(X)\subset \mathcal{D}$ holds. In this case it is easy to see that
the aforementioned summability condition \eqref{S} from Remark
\ref{summationsremark} (a) is true. The following elementary
observation provides further characterizations for this inclusion.

 For $x \in X$ we write $\delta_x$ for the function that equals $1$ at $x$ and
is
$0$ otherwise.
\begin{prop}
Let $(X,b)$ be a weighted graph. Then the following are equivalent:
\begin{enumerate}[(i)]
 \item $(X,b)$ satisfies the summability condition \eqref{S}:
 $$
 \sum_{y\in X}b(x,y)< \infty\mbox{  for all  }x\in X.
 $$
 \item  $\delta_x\in\mathcal{D}$ for all $x\in X$.
 \item  $\cece (X)\subset \mathcal{D}$.
 \item  $\cece (X)\cap \mathcal{D}$ is dense in $\cece(X)$ w.r.t.
$\|\cdot\|_\infty$, i.e., for one [any]
 measure $m$ on $X$ with full support, $\en^{(D)}$ is a regular Dirchlet form on
$\ell^2(X,m)$ (regular
 w.r.t. the discrete topology on $X$.)
\end{enumerate}
\end{prop}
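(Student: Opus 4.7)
The plan is to prove the cycle (i) $\Rightarrow$ (ii) $\Rightarrow$ (iii) $\Rightarrow$ (iv) $\Rightarrow$ (i), of which the last implication is the only nontrivial step.

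First I would handle (i) $\Leftrightarrow$ (ii) by a direct computation: since $b$ is symmetric, the pairs $(y,z)$ for which $(\delta_x(y)-\delta_x(z))^2 \neq 0$ are exactly those where exactly one coordinate equals $x$, and the factor $\tfrac12$ in the definition of $\en$ cancels the double-counting. Thus $\en(\delta_x) = \sum_{y\in X} b(x,y)$, so $\delta_x\in\mathcal{D}$ iff $\sum_y b(x,y)<\infty$. Next, (ii) $\Leftrightarrow$ (iii): the inequality $(a+b)^2\le 2a^2+2b^2$ (equivalently, that $\en^{1/2}$ is a seminorm) shows that $\mathcal{D}$ is a linear subspace of $\ce(X)$; since $\cece(X)$ is spanned by the functions $\delta_x$, it lies in $\mathcal{D}$ iff each $\delta_x$ does.

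The implication (iii) $\Rightarrow$ (iv) is essentially immediate. Under (iii) we have $\cece(X)\cap\mathcal{D} = \cece(X)$, which is trivially $\|\cdot\|_\infty$-dense in $\cece(X)$. For the Dirichlet form reformulation: for any probability (or locally finite) measure $m$ of full support, $\cece(X)\subset\mathcal{D}\cap\ell^2(X,m)$, and $\en^{(D)}$ is by definition the closure of $\cece(X)$ in the form norm, so $\cece(X)$ is a core, dense both in $D(\en^{(D)})$ (form norm) and in $C_c(X)=\cece(X)$ (sup norm); invariance under normal contractions is the standard consequence of the formula $\en(f)=\tfrac12\sum_{x,y}b(x,y)(f(x)-f(y))^2$, and closedness and symmetry are built in. Hence $\en^{(D)}$ is a regular Dirichlet form.

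The one genuinely nontrivial step is (iv) $\Rightarrow$ (i), which I would prove by contraposition: suppose $\sum_y b(x_0,y)=\infty$ for some $x_0\in X$. I claim $\delta_{x_0}$ cannot be approximated in $\|\cdot\|_\infty$ by functions in $\cece(X)\cap\mathcal{D}$. Indeed, take any $g\in\cece(X)$ with $\|g-\delta_{x_0}\|_\infty<\tfrac14$. Then $g(x_0)>\tfrac34$ and $|g(y)|<\tfrac14$ for every $y\neq x_0$, so $(g(x_0)-g(y))^2>\tfrac14$ for all such $y$. Bounding $\en(g)$ below by the edges incident to $x_0$ gives
\[
\en(g)\ \ge\ \sum_{y\in X} b(x_0,y)(g(x_0)-g(y))^2\ >\ \tfrac14\sum_{y\in X} b(x_0,y)\ =\ \infty,
\]
so $g\notin\mathcal{D}$. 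Hence no such approximating $g$ exists in $\cece(X)\cap\mathcal{D}$, contradicting (iv); this establishes (i) and closes the cycle.

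The main obstacle, as indicated above, is precisely step four: the passage from topological (sup-norm) approximation to a quantitative energy lower bound. The key observation making it work is that uniform closeness to $\delta_{x_0}$ forces a jump of size $>\tfrac12$ across \emph{every} edge incident to $x_0$, so summability of $b(x_0,\cdot)$ is forced by any energy-finite approximant.
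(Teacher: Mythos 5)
Your proof is correct. The easy parts agree with the paper: the identity $\en(\delta_x)=\sum_{y}b(x,y)$ for (i) $\Leftrightarrow$ (ii), linearity of $\mathcal{D}$ for (ii) $\Leftrightarrow$ (iii), and the trivial (iii) $\Rightarrow$ (iv). Where you diverge is in closing the cycle. The paper proves (iv) $\Rightarrow$ (ii) directly: it takes a sup-norm approximant $\varphi_\varepsilon\in\cece(X)\cap\mathcal{D}$ of $\delta_x$, forms the cutoff $\psi:=(\varphi_\varepsilon-\tfrac14)_+$, observes that $\psi$ is supported only at $x$ and still lies in $\mathcal{D}$ (this uses that $t\mapsto(t-\tfrac14)_+$ is a normal contraction, so it does not increase energy), and concludes $\delta_x=c\psi\in\mathcal{D}$. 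You instead prove the contrapositive of (iv) $\Rightarrow$ (i) by a direct quantitative bound: any $g$ uniformly within $\tfrac14$ of $\delta_{x_0}$ has a jump of size greater than $\tfrac12$ across every edge at $x_0$, so $\en(g)\ge\tfrac14\sum_y b(x_0,y)=\infty$ and no approximant can have finite energy. Both arguments hinge on the same elementary observation about uniform approximants of $\delta_x$, but yours avoids invoking the contraction property of $\en$ and is in that sense more self-contained, while the paper's version has the small advantage of producing $\delta_x\in\mathcal{D}$ explicitly rather than arguing by contradiction. Either route is a complete and valid proof.
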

\begin{proof}

(i) $\Longleftrightarrow$ (ii) is clear from the fact that
$$
\sum_{y\in X}b(x,y)=\en(\delta_x) .
$$

(ii) $\Longrightarrow$ (iii) $\Longrightarrow$ (iv) is clear.

(iv) $\Longrightarrow$(ii): Fix $x\in X$. By assumption, for any $\varepsilon >0$
there is
$\varphi_\varepsilon\in\mathcal{D}$ such that $\|
\varphi_\varepsilon-\delta_x\|_\infty <
\varepsilon$. For $\varepsilon\le \frac{1}{42}$ we have that
$\varphi_\varepsilon\le \frac14$ for $y\not=x$
and $\varphi_\varepsilon(x)\ge\frac14$. Clearly,
$\psi:=(\varphi_\varepsilon-\frac14)_+\in
\mathcal{D}$ and $\psi(x)>0$, so that $\delta_x=c\psi\in\mathcal{D}$.
\end{proof}

\begin{remark}
  The equivalence of (iii) and (iv) is a slight extension of Lemma~2.1 in \cite{KL1}.
\end{remark}

Under the assumption $\cece (X)\subset \mathcal{D}$ it is
natural to study whether there is $c$ such that
$$ \|f\|_\mathcal{V}^2 \leq c \mathcal{E}(f)$$
for all $f\in \cece (X)$. We will see in the sequel that analogues of the
considerations of the results in previous section hold in this case as well.
For $f\in \cece (X)$ we clearly have that  $\|\cdot\|_{\mathcal{V}}$
and $\|\cdot\|_\infty $ are equivalent in the sense that
$$ \|f\|_\infty \leq \|f\|_{\mathcal{V}} \leq 2 \|f\|_\infty$$
 for all $f\in \cece (X)$. For this reason we will now restrict
attention to the inequality
\begin{align}
 \|f\|_\infty^2 \leq c  \mathcal{E} (f) \tag{TPI$_c$} \label{TPIc}
\end{align}
and say that \emph{a global Poincar\'{e} inequality holds for $\cece
(X)$} provided it is satisfied for a suitable constant.
Graphs satisfying this inequality are termed \textit{uniformly
transient} in \cite{KLSW} and have been thoroughly studied in this latter
reference.
Again, we can characterize graphs
satisfying this property in various ways.  We will need one more
pseudometric to do so. Specifically, we define $r_0$ as
$$r_0 :=\sup_{\Omega\subset X : finite} r_\Omega.$$
Hence,
$$r_0 (x,y) =\sup\{ |f(x) - f(y)|^2 \mid f\in \cece (X), f\geq 0, \mathcal{E}(f)
\leq 1\}.$$
The relevance of $r_0$ comes from the following proposition.

\begin{prop}\label{prop-equality} The equality  $$\diam_{r_0} (X) = \sup\{
\|f\|_\infty^2 \mid f\in \cece (X) \mbox{ with }  \mathcal{E}(f)\leq
1\}$$ holds.
\end{prop}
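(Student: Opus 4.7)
The plan is to prove the two inequalities separately, using that $f \mapsto |f|$ does not increase energy and that finitely supported functions automatically vanish at some point of the infinite set $X$.

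For the inequality $\diam_{r_0}(X) \leq \sup\{\|f\|_\infty^2 \mid f\in\cece(X),\ \mathcal{E}(f)\leq 1\}$, I would simply observe that if $f \in \cece(X)$ with $f \geq 0$ and $\mathcal{E}(f) \leq 1$, then for any $x, y \in X$ we have
$$|f(x) - f(y)| \leq \max\{f(x), f(y)\} \leq \|f\|_\infty.$$
Squaring and taking the supremum over all admissible $f$ yields $r_0(x,y) \leq \sup\{\|f\|_\infty^2 \mid f\in\cece(X),\ \mathcal{E}(f)\leq 1\}$, and then a further supremum over $x,y$ gives the desired bound.

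For the reverse inequality, let $f \in \cece(X)$ with $\mathcal{E}(f) \leq 1$. I would pass to $|f|$, which lies in $\cece(X)$, is nonnegative, and satisfies $\mathcal{E}(|f|) \leq \mathcal{E}(f) \leq 1$, because the pointwise inequality $(|a|-|b|)^2 \leq (a-b)^2$ applied termwise to the defining sum immediately yields $\mathcal{E}(|f|) \leq \mathcal{E}(f)$. Since $X$ is infinite and $|f|$ has finite support, there exists $y \in X$ with $|f|(y) = 0$; and since $|f|$ is finitely supported, there exists $x \in X$ at which $|f|$ attains its maximum $\||f|\|_\infty = \|f\|_\infty$. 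Consequently
$$\|f\|_\infty^2 = \bigl(|f|(x) - |f|(y)\bigr)^2 \leq r_0(x,y) \leq \diam_{r_0}(X),$$
where the first inequality uses the definition of $r_0$ applied to the admissible function $|f|$. Taking the supremum over $f$ completes the proof.

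There is essentially no obstacle: both directions are bookkeeping once one notices the two ingredients (contraction under $|\cdot|$ and the automatic existence of a zero of any finitely supported function in an infinite set). The only point worth flagging is the use of the standing assumption $\cece(X) \subset \mathcal{D}$ from the beginning of the section, which ensures that $|f| \in \mathcal{D}$ so that $\mathcal{E}(|f|)$ and hence $r_0$ sees this test function.
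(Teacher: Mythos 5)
Your proof is correct and uses exactly the same two ingredients as the paper's argument, namely that $\mathcal{E}(|f|)\leq\mathcal{E}(f)$ and that a nonnegative finitely supported function on the infinite set $X$ vanishes somewhere, so its oscillation equals its supremum norm. The paper merely packages these as a chain of equalities of suprema rather than as two separate inequalities; the content is identical.
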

\begin{proof}
We note first that for $f\in \cece (X)$ with $f\geq 0$ the equality
$\sup_{x,y} |f(x) - f(y)|^2  = \|f\|_\infty^2$  holds and that
$\mathcal{E} (|f|) \leq \mathcal{E}(f)$ for any $f\in\mathcal{D}$
(as a direct calculation shows).  Now, we can argue as follows
\begin{eqnarray*}
\diam_{r_0} (X)  &=& \sup_{x,y}\{|f(x) -
f(y)|^2 : f\in \cece^+ (X), \mathcal{E} (f) \leq 1\}\\
&=& \sup\{ \|f\|_\infty^2 \mid f\in \cece^+ (X), \mathcal{E} (f) \leq 1\}\\
 &=& \sup \{ \| |f| \|_\infty^2 \mid f\in \cece (X),
\mathcal{E} (f) \leq
1\}\\
&=& \sup\{\|f\|_\infty^2 \mid f\in \cece (X), \mathcal{E} (f) \leq 1\}.
\end{eqnarray*}
This finishes the proof.
\end{proof}

We denote the closure of $\cece (X)$ in  $\mathcal{D}$ with respect to
the norm $\mathcal{E}(f)^{1/2}$ by $\mathcal{D}_0$.

\begin{theorem}[Characterization validity Poincar\'{e} for $\cece
(X)$]\label{thm-ut} The following assertions are equivalent:

\begin{itemize}
\item[(i)] A global  Poincar\'{e} inequality \eqref{TPIc} holds for $\cece
(X)$.

\item[(ii)] The set $\mathcal{D}_0$ is contained in $\ce_0
(X):=\overline{\cece(X)}^{\|\cdot\|_{\infty}}$.

\item[(iii)] $\diam_{r_0} (X) <\infty$.

\end{itemize}
\end{theorem}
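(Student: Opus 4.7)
The plan is to handle $\text{(i)} \Leftrightarrow \text{(iii)}$ directly from Proposition~\ref{prop-equality} and to obtain $\text{(i)} \Leftrightarrow \text{(ii)}$ by an approximation argument together with a closed graph theorem modelled on the proof of Theorem~\ref{thm-char}. For $\text{(i)} \Leftrightarrow \text{(iii)}$, Proposition~\ref{prop-equality} already identifies $\diam_{r_0}(X)$ with $\sup\{\|f\|_\infty^2 : f \in \cece(X),\ \mathcal{E}(f) \leq 1\}$, so a homogeneity argument turns finiteness of this quantity into the Poincar\'{e} inequality \eqref{TPIc} with best constant $c = \diam_{r_0}(X)$, and conversely.

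For $\text{(i)} \Rightarrow \text{(ii)}$ I would take $f \in \mathcal{D}_0$ with an approximating sequence $f_n \in \cece(X)$ satisfying $\mathcal{E}(f_n - f) \to 0$. Since each difference $f_n - f_m$ lies in $\cece(X)$, the assumed inequality forces $(f_n)$ to be Cauchy in $\|\cdot\|_\infty$; its uniform limit $g$ therefore belongs to $\ce_0(X)$ by definition. To identify $f$ with $g$ I would fix a basepoint $o \in X$ and invoke Proposition~\ref{prop-distance}, which gives $f_n(x) - f_n(o) \to f(x) - f(o)$ for every $x \in X$; comparing with the pointwise limit $g$ of $f_n$ shows that $f - g$ is a constant function, and since $X$ is infinite $\ce_0(X)$ contains only the zero constant, so the difference must vanish.

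For $\text{(ii)} \Rightarrow \text{(i)}$ the idea is to apply the closed graph theorem to the inclusion
$$\iota \colon (\mathcal{D}_0, \mathcal{E}^{1/2}) \longrightarrow (\ce_0(X), \|\cdot\|_\infty).$$
Three verifications are required. First, $\mathcal{E}^{1/2}$ is a genuine norm on $\mathcal{D}_0$, because any $f \in \mathcal{D}_0 \subset \ce_0(X)$ with $\mathcal{E}(f) = 0$ is constant by connectedness and hence zero since $X$ is infinite. Second, $(\mathcal{D}_0, \mathcal{E}^{1/2})$ is complete: an $\mathcal{E}^{1/2}$-Cauchy sequence in $\mathcal{D}_0$ can be approximated termwise by one in $\cece(X)$, which converges in $\mathcal{D}/\lin\{1\}$ by the Banach space property invoked in the proof of Theorem~\ref{thm-char}, and the correct constant representative of the limit is pinned down by (ii). Third, closedness of $\iota$ follows from the same basepoint trick, using Proposition~\ref{prop-distance} to extract pointwise convergence from $\mathcal{E}^{1/2}$-convergence and then eliminating the residual constant via $\ce_0(X)$.

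The main obstacle is the careful management of constants: since $\mathcal{E}^{1/2}$ is only a seminorm on $\mathcal{D}$, every limit in the $\mathcal{E}^{1/2}$-topology is defined only modulo $\lin\{1\}$, and both directions of $\text{(i)} \Leftrightarrow \text{(ii)}$ crucially exploit (ii) together with the infiniteness of $X$ to single out a canonical representative of each equivalence class inside $\ce_0(X)$. Once this bookkeeping is in place, the closed graph machinery proceeds in direct parallel to the proof of Theorem~\ref{thm-char}, now with $\ce_0(X)$ playing the role that $\ell^\infty(X)/\lin\{1\}$ played there.
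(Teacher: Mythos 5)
Your handling of (i) $\Leftrightarrow$ (iii) via Proposition~\ref{prop-equality} and a homogeneity argument is exactly what the paper does. For (i) $\Leftrightarrow$ (ii) you part ways with the paper, which simply observes that \eqref{S} holds in this setting and cites \cite{GHKLW}; your attempt at a self-contained proof (uniform Cauchyness of approximating sequences for (i) $\Rightarrow$ (ii), and a closed graph argument for the inclusion $(\mathcal{D}_0,\mathcal{E}^{1/2})\hookrightarrow(\ce_0(X),\|\cdot\|_\infty)$ for the converse) is a legitimate and more informative route. The (ii) $\Rightarrow$ (i) half is a sound adaptation of the proof of Theorem~\ref{thm-char}: there both $f$ and the uniform limit $g$ lie in $\ce_0(X)$ by hypothesis, so the residual constant $f-g$ lies in $\ce_0(X)$ and must vanish, and the remaining completeness bookkeeping is fixable along the lines you sketch.

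The identification step in your (i) $\Rightarrow$ (ii), however, has a genuine gap. Having produced the uniform limit $g\in\ce_0(X)$ of the approximating sequence and shown via the basepoint trick that $f-g$ is a constant $c1$, you conclude $c=0$ ``since $\ce_0(X)$ contains only the zero constant''. But that reasoning presupposes $f-g\in\ce_0(X)$, i.e.\ $f\in\ce_0(X)$ --- which is precisely assertion (ii), the thing being proved. The difficulty is real, not cosmetic: with the literal reading of $\mathcal{D}_0$ as the closure of $\cece(X)$ with respect to the seminorm $\mathcal{E}(\cdot)^{1/2}$, every constant function belongs to $\mathcal{D}_0$ (approximate by $f_n=0$), so for infinite $X$ assertion (ii) would simply be false and the implication cannot be salvaged by any argument. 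The theorem is correct under the standard definition of $\mathcal{D}_0$ (as in \cite{KLSW} and \cite{GHKLW}), which additionally requires the approximating sequence to converge to $f$ pointwise; with that definition your argument closes immediately, since $f_n\to f$ pointwise and $f_n\to g$ uniformly force $f=g$ with no constant left to dispose of. You correctly flagged ``management of constants'' as the main obstacle, but for this direction it cannot be handled by appealing to (ii); it has to be resolved at the level of the definition of $\mathcal{D}_0$.
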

\begin{proof} Since $(X,b)$ satisfies \eqref{S}, the
equivalence between (i) and (ii)  is already contained in \cite{GHKLW}. The
equivalence
of (i) and (iii) follows immediately from the previous
proposition.
\end{proof}

Again, we may ask for the best constant $c_P^0$ in the Poincar\'{e}
inequality. Here, the geometric and the functional analytic
description is as follows.

\begin{prop}[Geometric and functional analytic interpretation of
$c_P^0$]\label{prop-char-geom-fa-null} The equalities
\begin{align*} c_P^0 &= \diam_{r_0}(X)\\
&= (\mbox{norm of the embedding
$(\mathcal{D}_0,\|\cdot\|_{\mathcal{E}})\longrightarrow (C_0
(X),\|\cdot\|_\infty)$})^2
\end{align*}
hold.
\end{prop}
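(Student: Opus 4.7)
The plan is to prove the two equalities in turn, with the second being the substantial one.

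For $c_P^0 = \diam_{r_0}(X)$, I would argue directly from the definition. Since $c_P^0$ is by definition the least constant in \eqref{TPIc} valid on $\cece(X)$, homogeneity in $f$ yields
$$c_P^0 = \sup\{\|f\|_\infty^2 : f \in \cece(X),\ \mathcal{E}(f) \le 1\},$$
and Proposition \ref{prop-equality} identifies this supremum with $\diam_{r_0}(X)$.

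For the second equality I would first check that the embedding $j : (\mathcal{D}_0, \|\cdot\|_\mathcal{E}) \to (C_0(X), \|\cdot\|_\infty)$ is a well-defined linear map. This follows from Theorem \ref{thm-ut}, which under the standing hypothesis \eqref{TPIc} gives $\mathcal{D}_0 \subset C_0(X)$. Note that $\mathcal{E}^{1/2}$ is in fact a norm on $\mathcal{D}_0$: any element of zero energy is constant on the connected graph and, lying in $C_0(X)$ on an infinite set, must vanish. Consequently
$$\|j\|^2 = \sup\{\|f\|_\infty^2 : f \in \mathcal{D}_0,\ \mathcal{E}(f) \le 1\},$$
and the inclusion $\cece(X) \subset \mathcal{D}_0$ immediately yields $c_P^0 \le \|j\|^2$.

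For the reverse inequality $\|j\|^2 \le c_P^0$ I would use density: given $f \in \mathcal{D}_0$ with $\mathcal{E}(f) \le 1$, pick $f_n \in \cece(X)$ with $\mathcal{E}(f_n - f) \to 0$. The Poincaré inequality for $\cece(X)$ applied to $f_n - f_m$ forces $(f_n)$ to be Cauchy in $\|\cdot\|_\infty$ as well, so $f_n \to g$ uniformly for some $g \in C_0(X)$. The main obstacle is identifying $g = f$; for this I would invoke Proposition \ref{prop-distance} applied to $f_n - f \in \mathcal{D}$ with a fixed reference point $o$, giving
$$|(f_n - f)(x) - (f_n - f)(o)|^2 \le d(x, o)\, \mathcal{E}(f_n - f) \longrightarrow 0$$
for every $x$. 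This shows $f - g$ is constant, and being in $C_0(X)$ on an infinite $X$ it must vanish, so $g = f$. Passing to the limit in $\|f_n\|_\infty^2 \le c_P^0\, \mathcal{E}(f_n)$ then delivers $\|f\|_\infty^2 \le c_P^0\, \mathcal{E}(f)$, whence $\|j\|^2 \le c_P^0$.
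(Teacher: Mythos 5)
Your proposal is correct. The first equality is obtained exactly as the paper obtains it: by homogeneity $c_P^0$ is the supremum of $\|f\|_\infty^2$ over $f\in\cece(X)$ with $\mathcal{E}(f)\le 1$, and Proposition~\ref{prop-equality} identifies this with $\diam_{r_0}(X)$. For the second equality the paper offers no written argument (it declares the identity clear from the proof of Theorem~\ref{thm-ut}), so your density argument is a genuine filling-in rather than a reproduction. It is sound: the only delicate point is the identification $g=f$ after extracting a uniform limit $g$ of the approximating sequence, and you handle it correctly by combining the pointwise control $|(f_n-f)(x)-(f_n-f)(o)|^2\le d(x,o)\,\mathcal{E}(f_n-f)$ from Proposition~\ref{prop-distance} (legitimate since the graph is connected, so $d(x,o)<\infty$) with the fact that a constant function in $\ce_0(X)$ on the infinite set $X$ vanishes; this mirrors the \eqref{2star} step in the proof of Theorem~\ref{thm-char}, transplanted from the Neumann to the Dirichlet setting. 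Your appeal to Theorem~\ref{thm-ut}(ii) to ensure $f\in\ce_0(X)$, and your remark that $\mathcal{E}^{1/2}$ is an honest norm on $\mathcal{D}_0$ (so that the operator norm of the embedding is well defined), are both needed and both justified under the standing assumptions of the section ($X$ infinite, \eqref{TPIc} in force). What your route buys over the paper's terse reference is an explicit verification that the supremum defining $\|j\|^2$ over all of $\mathcal{D}_0$ is not strictly larger than the supremum over the dense subspace $\cece(X)$ — precisely the point a reader might otherwise have to take on faith.
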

\begin{proof} This is clear from the proof of the previous theorem.
\end{proof}

As in the previous section we may also give a description of the
best constant in terms of eigenvalues.

\begin{theorem}[Computing $c_P^0$ via $\ell^2$-theory]
\label{char-c-null}
$$\frac{1}{c_P^0} = \inf\{  \mbox{lowest eigenvalue of
$H^{(D)}_m$}\},$$ where the infimum is taken over all probability
measures on $X$ of full support.
\end{theorem}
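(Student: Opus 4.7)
The plan is to mirror the strategy used for Theorem~\ref{thm-computing}, but in place of Proposition~\ref{proposition:inequality} the role is played by the (much simpler) inequality $\|f\|_2^2 \le \|f\|_\infty^2$, valid for any probability measure $m$ and any bounded $f$, with equality attainable in the limit by concentrating $m$ near a point at which $|f|$ is maximized. Denote by $\lambda_0 = \lambda_0(m)$ the lowest eigenvalue of $H^{(D)}_m$; by Lemma~\ref{trace} applied to $\mathcal{E}^{(D)}$ this operator has pure point spectrum whenever \eqref{TPIc} holds, so $\lambda_0$ is genuinely an eigenvalue. We have to show $c_P^0 = \sup_m \lambda_0(m)^{-1}$, where the supremum ranges over probability measures of full support.

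For the direction $\frac{1}{c_P^0} \le \inf_m \lambda_0(m)$, fix such an $m$. For every $f \in \cece(X)$ we have
$$\|f\|_2^2 = \sum_x |f(x)|^2 m(x) \le \|f\|_\infty^2 \le c_P^0\, \mathcal{E}(f).$$
Since $\mathcal{D}_0$ is by definition the closure of $\cece(X)$ in the form norm of $\mathcal{E}$, this inequality extends to all $f \in \mathcal{D}_0$, and the variational characterization of $\lambda_0$ gives $\lambda_0(m) \ge \frac{1}{c_P^0}$.

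For the reverse direction $\frac{1}{c_P^0} \ge \inf_m \lambda_0(m)$, fix $\varepsilon>0$ and choose $f \in \cece(X)$ with
$$\mathcal{E}(f) < \frac{1+\varepsilon}{c_P^0}\,\|f\|_\infty^2.$$
Replacing $f$ by $|f|$ (which does not increase the energy and preserves $\|f\|_\infty$, as noted in the proof of Proposition~\ref{prop-equality}) we may assume $f \ge 0$. Since $f$ has finite support, it attains its maximum at some $x^* \in X$ with $f(x^*) = \|f\|_\infty$. Given $\eta>0$, we now construct a probability measure $m$ of full support on $X$ by assigning mass $1-\eta$ to $x^*$ and distributing the remaining mass $\eta$ arbitrarily but strictly positively across $X\setminus\{x^*\}$ (e.g. via any summable positive sequence). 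Then
$$\|f\|_2^2 \ge (1-\eta)\, f(x^*)^2 = (1-\eta)\,\|f\|_\infty^2,$$
so that $f \in \cece(X) \subset \mathcal{D}_0 \cap \ell^2(X,m)$ is a legitimate test function yielding
$$\lambda_0(m) \le \frac{\mathcal{E}(f)}{\|f\|_2^2} < \frac{1+\varepsilon}{(1-\eta)\,c_P^0}.$$
Letting first $\eta \to 0$ and then $\varepsilon \to 0$ gives $\inf_m \lambda_0(m) \le \frac{1}{c_P^0}$, which completes the proof.

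The main (and really only) conceptual point is the interplay between the full-support constraint on $m$ and the desire to concentrate mass at a single vertex; but because $X$ is countable and the inequality we need between $\|f\|_2$ and $\|f\|_\infty$ is linear rather than quadratic (contrast the factor $\frac14$ in Proposition~\ref{proposition:inequality}), a crude tail distribution suffices, and no analogue of the arithmetic--geometric mean argument is required.
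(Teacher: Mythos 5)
Your proposal is correct and follows essentially the same route as the paper: the easy direction via $\|f\|_2^2\le\|f\|_\infty^2\le c_P^0\,\mathcal{E}(f)$ on $\cece(X)$ and form-norm density, and the reverse direction by choosing a near-optimal $f\in\cece(X)$ and concentrating almost all of the mass of a full-support probability measure at a maximizer of $|f|$. Your write-up merely spells out a few details the paper leaves implicit (the explicit tail distribution of mass $\eta$, the reduction to $f\ge 0$, and the appeal to Lemma~\ref{trace} to justify that the bottom of the spectrum is an eigenvalue).
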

\begin{proof} We show two inequalities:

``$\leq$'': Let $f\in \cece (X)$ be arbitrary. Then,
$$\|f\|_2^2 \leq \|f\|_\infty^2 \leq c_p^0 \mathcal{E} (f)$$
and the desired inequality follows.

``$\geq$'': Let $\varepsilon >0$ be arbitrary. Then, there exists
an $f\in \cece (X)$ with
$$ \|f\|_\infty^2 \geq \frac{c_P^0}{ 1 + \varepsilon}
\mathcal{E}(f).$$ We can now choose a probability measure on $X$ of
full support that gives almost all of its  mass to the point, where
$f(x)^2$ attains its maximum. Then, $\|f\|_2^2 \geq \frac{1}{1 +
\varepsilon} \|f\|_\infty^2$. Put together we see
$$ \|f\|_2^2 \geq \frac{c_p^0}{(1+\varepsilon)^2} \mathcal{E}(f)$$
for all $f\in \cece (X)$. This implies
$$\frac{(1+\varepsilon)^2}{c_p^0} \geq \lambda_0$$
for the lowest eigenvalue $\lambda_0$ of $H_m^{(D)}$  and the
desired inequality follows.
\end{proof}

Whenever $(X,b)$ is a graph we define the \textit{Dirichlet
inradius} with respect to $d$ by
$$\inr_d^D (X)  := \sup_{A\subset X : \mbox{finite}}  \inr_d (A).$$
Clearly, $\inr_d^D (X) \leq \diam_d (X)$. Hence, finiteness of the
diameter implies finiteness of the Dirichlet inradius. However, as
shown by an example below we may have finiteness of the Dirichlet
inradius for graphs with unbounded diameter.

\begin{coro} Let $(X,b)$ be a graph with $\inr_d^D (X) <\infty$. Then,
$(X,b)$ is uniformly transient and for any probability measure $m$
of full support we have for the lowest eigenvalue $\lambda_0$  of
$H_m^{(D)}$ the estimate $$\lambda_0 \geq \frac{1}{\inr_d^D (X)}.$$
\end{coro}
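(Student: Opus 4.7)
The plan is to show $c_P^0 \le \inr_d^D(X)$, from which uniform transience follows by Theorem~\ref{thm-ut} and the eigenvalue bound follows by Theorem~\ref{char-c-null}. By Proposition~\ref{prop-equality} we have $c_P^0 = \diam_{r_0}(X) = \sup\{\|f\|_\infty^2 \mid f\in\cece(X),\ \mathcal{E}(f)\le 1\}$, so it suffices to prove
$$\|f\|_\infty^2 \le \inr_d^D(X)\,\mathcal{E}(f) \qquad \text{for all } f\in\cece(X).$$

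Given a nonzero $f\in\cece(X)$, let $\Omega := \mathrm{supp}(f)$, which is finite, and pick $x_0\in\Omega$ realizing $|f(x_0)| = \|f\|_\infty$. For any $y\in X\setminus\Omega$ we have $f(y)=0$, so Proposition~\ref{prop-distance} yields
$$\|f\|_\infty^2 = |f(x_0)-f(y)|^2 \le d(x_0,y)\,\mathcal{E}(f).$$
Taking the infimum over $y\in X\setminus\Omega$ gives $\|f\|_\infty^2 \le d(x_0, X\setminus\Omega)\mathcal{E}(f)$. Since $U_s(x_0)\subset\Omega$ holds precisely for $s\le d(x_0,X\setminus\Omega)$, we have $d(x_0,X\setminus\Omega) \le \inr_d(\Omega) \le \inr_d^D(X)$, and the desired bound follows.

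Hence $c_P^0 \le \inr_d^D(X) < \infty$, so $(X,b)$ satisfies condition~(iii) (and thus~(i)) of Theorem~\ref{thm-ut}, i.e.\ it is uniformly transient. Applying Theorem~\ref{char-c-null} to any probability measure $m$ of full support we then obtain
$$\lambda_0 \ge \frac{1}{c_P^0} \ge \frac{1}{\inr_d^D(X)},$$
which is the claimed lower bound.

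There is no real obstacle here: the whole argument reduces to the observation that a function supported in a finite set $\Omega$ vanishes just outside $\Omega$, so its sup-norm is controlled by the path-metric distance from its maximizing vertex to the complement of $\Omega$ via the elementary inequality of Proposition~\ref{prop-distance}. The only mild point to verify is the identification $\sup\{s : U_s(x)\subset\Omega\} = d(x,X\setminus\Omega)$ used to relate $d(x_0,X\setminus\Omega)$ to $\inr_d(\Omega)$, which is immediate from the definition of $U_s(x)$.
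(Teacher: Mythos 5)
Your proof is correct, and it is essentially the argument the paper intends: the corollary is stated without proof, and the natural route is exactly yours, namely to bound $c_P^0=\diam_{r_0}(X)$ by $\inr_d^D(X)$ and then invoke Theorem~\ref{thm-ut} for uniform transience and Theorem~\ref{char-c-null} for the eigenvalue bound. Your direct derivation of $\|f\|_\infty^2\le \inr_d^D(X)\,\mathcal{E}(f)$ from Proposition~\ref{prop-distance} (via $f(y)=0$ for $y\notin\operatorname{supp}(f)$ and the identity $\sup\{s: U_s(x)\subset\Omega\}=d(x,X\setminus\Omega)$) is a clean, self-contained substitute for the paper's implicit chain $r_\Omega\le r\le d$ together with Theorem~\ref{char-inradius}.
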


\begin{example}[$\inr^D_d (X) <\infty$ and $ \diam_d(X)=
\infty$] The example is given by a copy of $\ZZ$ in which at each
point a weighted copy of $\NN$ is attached such that the weights
make the total diameter of each of these copies of $\NN$  not exceed
$1$. Specifically, the vertex set $X$ is given as $\ZZ \times \NN$
and there is an edge with weight $1$ between $(n,0)$ and $(n+1,0)$
for any $n\in\ZZ$ and an edge with weight $1/2^{k+1}$ between
$(n,k)$ and $(n,k+1)$ for any $n\in\ZZ$ and $k\in\NN\cup\{0\}$.
Then, any ball of size, say, $2$,  contains a set of the form
$\{(n,k) : k\in\NN\}$  for some $n\in\ZZ$ and hence has  infinitely
many points. Thus, the Dirichlet inradius is bounded by $2$. At the
same time, for any natural number $n$  the ball around $(0,0)$ of
size $n$ does not cover the graph (as clearly it does not contain
e.g. $(n+1,0)$. Hence the diameter of the graph is infinite. Let us
emphasize that the example is a locally finite tree and hence the
metrics $d$ and $r$ agree (see \cite{GHKLW}). Thus, the above
inradius and diameter could also be taken with respect to $r$.
\end{example}

\section{Subgraphs of finite inradius  and  finite
measure}\label{Subgraphs} The preceding sections were concerned with
full graphs satisfying global Poincar\'{e} type inequalities. Here,
we show that any subgraph satisfies a  global Poincaré type
inequality in terms of its inradius.

We consider a graph $(X,b)$. A  subset $\Omega$  of $X$  is said to
satisfy a \textit{global topological Poincar\'{e} inequality} if there
exists a $c>0$ with
$$\|f\|_\infty^2 \leq c\mathcal{E}(f)$$
for all $f\in \mathcal{D}$ with $\mbox{supp}(f)\subset \Omega$. (As
in the previous section validity of this inequality is equivalent to
validity of the corresponding inequality with
$\|\cdot\|_{\mathcal{V}}$ instead  of $\|\cdot\|_\infty$.)  In this
case, we denote the best constant $c$ in this inequality by
$c_P^\Omega$. For the sake of definiteness we  set $c_P^\Omega :=
\infty$ in all other cases.

\begin{theorem}[Computing $c_P^{\Omega}$ via
$\ell^2$-theory]\label{spectral-theory-omega} Let $(X,b)$ be a graph
and $\Omega$ as subset of $X$ satisfying a globalo topological Poincar\'{e}
inequality. Then, $H^{\Omega}_m$ has pure point spectrum for any
probability measure $m$ of full support in $\Omega$. Moreover,
$$\frac{1}{c_P^{\Omega}} = \inf\{\lambda_0 \mid \mbox{$\lambda_0$ lowest
eigenvalue of $H^{\Omega}_m$}\}$$ holds.
\end{theorem}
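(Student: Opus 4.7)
The strategy is to deduce pure point spectrum and one of the two inequalities immediately from Lemma~\ref{trace}, and then to obtain the reverse inequality by an $\ell^2$-approximation argument in exactly the spirit of the proof of Theorem~\ref{char-c-null}.

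First I would apply Lemma~\ref{trace} to the closed form $Q = \mathcal{E}^\Omega$ on $\ell^2(\Omega,m)$ with associated operator $A = H^\Omega_m$ and constant $C = c_P^\Omega$. For any $f$ in the form domain of $\mathcal{E}^\Omega$ one has $\mbox{supp}(f) \subset \Omega$, so the standing Poincar\'{e} hypothesis gives $\|f\|_\infty^2 \leq c_P^\Omega\, \mathcal{E}^\Omega(f)$. Since $m$ is a probability measure with $m(\Omega)=1$, Lemma~\ref{trace} immediately yields pure point spectrum for $H^\Omega_m$ together with the bound $\lambda_0 \geq 1/c_P^\Omega$ on its lowest eigenvalue. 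Taking the infimum over all admissible $m$ proves the inequality $\inf_m \lambda_0 \geq 1/c_P^\Omega$.

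For the reverse inequality, fix $\varepsilon > 0$ and, by definition of $c_P^\Omega$ as the optimal constant, pick $f \in \mathcal{D}$ with $\mbox{supp}(f) \subset \Omega$ satisfying $\|f\|_\infty^2 \geq \frac{c_P^\Omega}{1+\varepsilon}\mathcal{E}(f)$. By the Poincar\'{e} inequality $f$ is bounded, so there is a point $x_0 \in \Omega$ with $f(x_0)^2 \geq \frac{1}{1+\varepsilon}\|f\|_\infty^2$. Now choose a probability measure $m$ of full support in $\Omega$ that places mass $1-\delta$ at $x_0$ and distributes the remaining mass $\delta$ among the (countably many) other points of $\Omega$ according to any strictly positive summable sequence. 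Such an $f$ then lies in the form domain of $\mathcal{E}^\Omega$, and the estimate $\|f\|_2^2 \geq m(x_0) f(x_0)^2 \geq \frac{1-\delta}{1+\varepsilon}\|f\|_\infty^2$ combined with the variational principle for $\lambda_0$ gives
$$\lambda_0 \;\leq\; \frac{\mathcal{E}(f)}{\|f\|_2^2} \;\leq\; \frac{(1+\varepsilon)^2}{(1-\delta)\, c_P^\Omega}.$$
Sending first $\delta \to 0$ and then $\varepsilon \to 0$ produces the desired upper bound on $\inf_m \lambda_0$.

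The only mildly delicate points are that the near-optimal $f$ need not attain its supremum and that $\Omega$ may be infinite; both are pure bookkeeping, resolved respectively by the approximate choice of $x_0$ and by the freedom to make the tail mass $\delta$ arbitrarily small while still spreading it over all of $\Omega$. In this sense no genuine obstacle arises: once Lemma~\ref{trace} is in place the argument is essentially a transcription of the proof of Theorem~\ref{char-c-null} with $H^{(D)}_m$ replaced by $H^\Omega_m$.
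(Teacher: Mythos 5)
Your proposal is correct and follows essentially the same route as the paper, which proves the first assertion by invoking Lemma~\ref{trace} and the variational formula ``along the same lines as the proof of Theorem~\ref{char-c-null}'' --- precisely the two ingredients you use. You have merely written out the details (the near-maximizing point $x_0$ and the mass distribution $1-\delta$ versus $\delta$) that the paper leaves implicit.
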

\begin{proof} The first statement follows from Lemma \ref{trace}.
The second statement follows along the same lines as the proof of
Theorem \ref{char-c-null}.
\end{proof}

Our next aim is to give a geometric characterization of validity of
a topological Poincar\'{e} inequality. To do so, we will invoke the
 pseudometric $r_\Omega$. Recall that it is defined via $$r_\Omega (x,y)
:=\sup\{ |f(x) - f(y)|^2 \mid f\in \mathcal{D} \mbox{ with } f\geq 0,
\mbox{supp}(f)\subset \Omega, \mathcal{E}(f)\leq 1\}.$$

\begin{theorem}[Geometric characterisation of $c_p^\Omega$]
\label{char-inradius}Let  $(X,b)$ an arbitrary graph and $\Omega$ be
a proper subset of $X$.  The equality
$$c_P^{\Omega} = \inr_{r_\Omega } (\Omega)
= \diam_{r_\Omega} (X)$$ holds. In particular, $\Omega$ allows for a
topological Poincar\'{e} inequality if and only if its inradius with
respect to $r_\Omega$ is finite.
\end{theorem}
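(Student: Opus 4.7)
The plan is to establish the equality in two stages: first $c_P^{\Omega} = \diam_{r_\Omega}(X)$, then $\diam_{r_\Omega}(X) = \inr_{r_\Omega}(\Omega)$. The final “in particular’’ clause is then immediate, since finiteness of $c_P^{\Omega}$ is by definition the validity of the topological Poincaré inequality on $\Omega$.

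For the inequality $\diam_{r_\Omega}(X) \leq c_P^{\Omega}$, I fix $x,y \in X$ and a test function $f \geq 0$ with $\mathrm{supp}(f) \subset \Omega$ and $\mathcal{E}(f)\leq 1$. Non‑negativity of $f$ yields $|f(x)-f(y)|\leq \max(f(x),f(y))\leq \|f\|_\infty$, so assuming the Poincaré inequality holds on $\Omega$ with constant $c_P^\Omega$ gives $|f(x)-f(y)|^2\leq c_P^\Omega \mathcal{E}(f)$. Taking the supremum in $f$ yields $r_\Omega(x,y)\leq c_P^\Omega$, hence the claim. For the reverse inequality $c_P^{\Omega}\leq \diam_{r_\Omega}(X)$, I first handle $f\geq 0$ supported in $\Omega$: since $\Omega\neq X$, pick any $y\notin\Omega$, so $f(y)=0$ and $\|f\|_\infty^2 = \sup_x |f(x)-f(y)|^2 \leq \diam_{r_\Omega}(X)\,\mathcal{E}(f)$ by the defining property of $r_\Omega$. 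For general $f$ with $\mathrm{supp}(f)\subset \Omega$ I decompose $f=f_+-f_-$; both parts are supported in $\Omega$, the inequality $\mathcal{E}(f_+,f_-)\leq 0$ (already used in the proof of Proposition \ref{proposition:r pseudometric}) gives $\mathcal{E}(f_\pm)\leq \mathcal{E}(f)$, and $\|f\|_\infty^2=\max(\|f_+\|_\infty^2,\|f_-\|_\infty^2)$, so the non‑negative case extends routinely.

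The heart of the matter is the identity $\diam_{r_\Omega}(X)=\inr_{r_\Omega}(\Omega)$. The key observation is that for any $y\notin \Omega$ the test functions in the definition of $r_\Omega$ vanish at $y$, so $r_\Omega(x,y) = R(x) := \sup\{f(x)^2 : f\geq 0,\ \mathrm{supp}(f)\subset\Omega,\ \mathcal{E}(f)\leq 1\}$ depends on $x$ alone (and equals $0$ for $x\notin\Omega$). Unravelling the definition of inradius, $U_s(x)\subset\Omega$ means $r_\Omega(x,y)\geq s$ for every $y\notin\Omega$, so
\[
\inr_{r_\Omega}(\Omega) = \sup_{x\in\Omega}\inf_{y\notin\Omega} r_\Omega(x,y) = \sup_{x\in\Omega} R(x).
\]
This last quantity is clearly $\leq \diam_{r_\Omega}(X)$ since $R(x)=r_\Omega(x,y_0)$ for any fixed $y_0\notin\Omega$. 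For the opposite inequality I again use $|f(x)-f(y)|\leq \max(f(x),f(y))$ for $f\geq 0$ to obtain $r_\Omega(x,y)\leq \max(R(x),R(y))$ for all $x,y\in X$; combined with the cases where $x$ or $y$ lies outside $\Omega$ (where $r_\Omega$ equals $R(\cdot)$ or $0$) this gives $\diam_{r_\Omega}(X)\leq \sup_{x\in\Omega}R(x)$.

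The main obstacle is getting the inradius identity right: one must notice that the suprema defining $r_\Omega$ all involve functions vanishing off $\Omega$, so the “distance to the boundary’’ collapses to a single function $R(x)$ of one variable. Once this is in hand, the inequalities $c_P^{\Omega}\leq \diam_{r_\Omega}(X)\leq c_P^{\Omega}$ reduce to the non‑negative‑function version of the Poincaré inequality, which in turn reduces via the $f_\pm$ decomposition to a manipulation already performed for $r$ in Section \ref{Metrics}.
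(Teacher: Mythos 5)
Your proof is correct and follows essentially the same route as the paper's: both arguments rest on the observation that the test functions defining $r_\Omega$ vanish on $X\setminus\Omega$, so that $r_\Omega(x,p)=\sup\{f(x)^2 : f\ge 0,\ \mathrm{supp}(f)\subset\Omega,\ \mathcal{E}(f)\le 1\}$ for every $p\notin\Omega$, which collapses the diameter, the inradius and the best Poincar\'{e} constant into one quantity. The only cosmetic differences are that you reduce general $f$ to $f\ge 0$ via the decomposition $f=f_+-f_-$ where the paper uses $\mathcal{E}(|f|)\le\mathcal{E}(f)$, and that you prove $\diam_{r_\Omega}(X)=\inr_{r_\Omega}(\Omega)$ directly through the one-variable function $R(x)$ rather than sandwiching the inradius between two inequalities against $c_P^{\Omega}$.
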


\begin{remark} may seem surprising that  inradius and diameter
of $\Omega$ agree (with respect to $r_\Omega$). The reason is that
the pseudometric $r_\Omega$ treats the whole complement of $\Omega$
as one point and this makes any two points close to $X \setminus
\Omega$ close to each other (even if they are rather far apart in
terms of, say, the pseudometric $r$ on the set $X$).
\end{remark}

\begin{proof} The equality ${c_P^{\Omega}} =  \diam_{r_\Omega}
(X)$ can be shown as in the proof of
Proposition \ref{prop-equality} (with $\cece (X)$ replaced by the set
of functions in $\mathcal{D}$ which vanish outside of $\Omega$). It
remains to  show ${c_P^{\Omega}} =\inr_{r_\Omega } (\Omega)$. To do
so, we establish two inequalities:

\textit{The inequality $\inr_{r_\Omega} (\Omega) \leq
{c_P^{\Omega}}$ holds:}  Choose $x\in X$ and  $p\in X\setminus \Omega$. Then,
for any
non-trivial $f\in \mathcal{D}$ with $f\geq 0$ and $f=0$ on $
X\setminus \Omega$ we find
$$ |f(x) - f(p)|^2 = |f(x)|^2 \leq \|f\|_\infty^2 \leq
{c_P^{\Omega}}\mathcal{E}(f).$$
This implies $r_\Omega (x,p) \leq {c_P^{\Omega}}$ and, therefore,
$\inr_{r_\Omega} (\Omega) \leq
{c_P^{\Omega}}$.

\textit{The inequality $ {c_P^{\Omega}} \leq \inr_{r_\Omega} $
holds:}  Choose $\inr_{r_\Omega} (\Omega)    < s$. Then for $x\in X$
we  can
find an $p\in X\setminus \Omega$ with
$$r_\Omega (x,p)  < s.$$
For  $f\in \mathcal{D}$ with $f\geq 0$
supported in $\Omega$ and $\mathcal{E} (f) \leq 1$ this  gives
$$|f(x)|^2= |f(x) - f(p)|^2 \leq s$$ and we conclude
$\|f\|_\infty^2 \leq s$. As f was arbitrary, we obtain ${c_P^{\Omega}} \leq s$.
This finishes the proof.
\end{proof}

As an application of the abstract results above we now obtain the
following  corollary.

\begin{coro}[The finite measure case]\label{finite-measure}
Let  a non-empty $\Omega\subset X$ with $\Omega \neq X$ be given and
assume  $m(\Omega)<\infty$ and $\inr_d(\Omega)<\infty$. Then,
$H^\Omega$ has pure point spectrum and for its lowest eigenvalue
$\lambda_\Omega$ we have
 $$
 \lambda_\Omega\ge \frac{1}{\inr_d(\Omega) m(\Omega)} .
 $$
\end{coro}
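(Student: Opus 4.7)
The plan is to feed the hypothesis $\inr_d(\Omega)<\infty$ through the pseudometric comparison established in Section~\ref{Metrics} and then invoke Theorem~\ref{char-inradius} together with Lemma~\ref{trace}.

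First I would use the chain $r_\Omega\le r\le d$, which is just Proposition~\ref{proposition:r pseudometric} combined with the estimate $r_\Omega\le r$. Pseudometric domination reverses when passing to inradii, because a smaller pseudometric has larger open balls: if $\delta_1\le\delta_2$ and $U^{\delta_1}_s(x)\subset\Omega$ then $U^{\delta_2}_s(x)\subset U^{\delta_1}_s(x)\subset\Omega$. Hence
$$\inr_{r_\Omega}(\Omega)\le \inr_d(\Omega)<\infty.$$
Plugging this into Theorem~\ref{char-inradius} yields $c_P^{\Omega}=\inr_{r_\Omega}(\Omega)\le \inr_d(\Omega)<\infty$, so $\Omega$ admits the topological Poincar\'e inequality
$$\|f\|_\infty^2\le c_P^{\Omega}\,\mathcal{E}^{\Omega}(f)$$
for every $f$ in the domain of $\mathcal{E}^{\Omega}$.

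The second half is to apply Lemma~\ref{trace} to the closed form $Q=\mathcal{E}^{\Omega}$ on $\ell^2(\Omega,m|_\Omega)$ with constant $C=c_P^{\Omega}$. Since $m(\Omega)<\infty$, the hypothesis of the lemma is met: it delivers pure point spectrum of the associated operator $H^{\Omega}$ and the lower bound
$$\lambda_\Omega\;\ge\;\frac{1}{c_P^{\Omega}\,m(\Omega)}\;\ge\;\frac{1}{\inr_d(\Omega)\,m(\Omega)},$$
which is the claimed estimate.

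There is no serious obstacle, as the heavy lifting is already done in Theorem~\ref{char-inradius} and Lemma~\ref{trace}. The only point requiring genuine care, and the step I would double--check most carefully, is the direction of the inradius comparison: from $r_\Omega\le d$ one gets $\inr_{r_\Omega}(\Omega)\le \inr_d(\Omega)$ (not the reverse), which is precisely what is needed to cap $c_P^\Omega$ by the $d$-inradius. The remainder is bookkeeping, combining the supremum norm bound on $\Omega$ with the trivial $\ell^2$-versus-$\ell^\infty$ inequality $\|f\|_2^2\le m(\Omega)\|f\|_\infty^2$ on $\ell^2(\Omega,m|_\Omega)$ that drives the proof of Lemma~\ref{trace}.
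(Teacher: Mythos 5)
Your argument is correct and follows the paper's own route: the paper's proof is exactly the chain $r_\Omega\le r\le d$ fed into Theorem~\ref{char-inradius} and Theorem~\ref{spectral-theory-omega}, and you correctly identify (and verify the direction of) the one nontrivial step, namely that pseudometric domination gives $\inr_{r_\Omega}(\Omega)\le\inr_d(\Omega)$. Your only deviation is cosmetic and arguably cleaner: you invoke Lemma~\ref{trace} directly on $\mathcal{E}^\Omega$ with the finite measure $m|_\Omega$, which yields the bound $\lambda_\Omega\ge 1/(c_P^\Omega\, m(\Omega))$ without the rescaling to a probability measure implicit in applying Theorem~\ref{spectral-theory-omega}.
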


\begin{remark}
  This generalizes a result of \cite{LSS}, which
required a uniform version of \eqref{S}.
\end{remark}

\begin{proof}
We clearly have $r_\Omega\leq r \leq d$.  Now, the desired
statements follow from the previous two theorems.
\end{proof}

\begin{remark}[Recovering best constants]
 The previous considerations deal with proper subsets  $\Omega$ of $X$. So, it may come as a surprise that they
can actually be used to say something on Poincar\'{e} type
inequalities in  the case $\Omega = X$ as well. In fact, we can
(partially) recover  results of the previous sections from it. As
this is instructive we briefly discuss this:

 {\em Recovering the best constant in  $\|f\|_\infty^2 \leq c
\mathcal{E} (f)$ for $f\in \cece (X)$:} We have already noticed that
the pseudometric $r_0$ appearing in the dealing with this inequality
is the supremum over all $r_\Omega$ for finite $\Omega\subset X$.
Indeed, validity of a Poincar\'{e} inequality for all $f\in \cece (X)$
is certainly equivalent to validity of a Poincar\'{e} inequality on
$\Omega$ for any finite $\Omega\subset X$. Hence, the considerations
of this section give another way to obtain the best constant $c_P^0
= \diam_{r_0} (X)$. In fact, this constant can the be seen as a form
of 'intrinsic Dirichlet  inradius'  as due to the definition of
$r_0$ and
 Theorem \ref{char-inradius} we have
\begin{eqnarray*}
 c_P^0 & = &  \diam_{r_0} (X)\\
 (\mbox{definition of $r_0$})\;\: & = & \sup \{ \diam_{r_\Omega} (X) \mid \Omega
\subset X \mbox{ finite}\} \\
(\mbox{Theorem \ref{char-inradius}})\;\:  &= &  \sup\{
\inr_{r_\Omega}(\Omega) \mid \Omega \subset X \mbox{ finite }\}.
\end{eqnarray*}

\textit{Recovering $\|f\|_{\mathcal{V}}^2 \leq c\mathcal{E} (f)$ for
all $f\in\mathcal{D}$:} We may define the pseudometric $r'$ via
$$r' :=\sup_{\Omega \subsetneq X} r_\Omega.$$ Then, the
considerations of this section show that validity of a Poincar\'{e}
inequality for all $f$ vanishing  somewhere is governed by $r'$. In
fact, this is already sufficient to obtain the Poincar\'{e}
inequality for all $f\in\mathcal{D}$:  Consider an arbitrary $f\in
\mathcal{D}$. If $f$ is constant there is nothing to show. If $f$ is
not constant we can assume   without loss of generality that $f_+$
is not trivial. Then, by replacing $f$ by $g=f + \lambda 1$ with a
suitable $\lambda\in \RR$ we can assume that $\|g_+\|_\infty$ is as
close to $\|f\|_{\mathcal{V}}$ as we wish and at the same time $g_-$
is not trivial. Then, $g_+$ must be zero somewhere and from the
Poincar\'{e} inequality for $r'$ we find
$$\|g_+\|_\infty^2 \leq \diam_{r'} (X) \mathcal{E} (g_+).$$
 As $\|g_+\|_\infty$ is as close to $\|f\|_{\mathcal{V}}$ as we
wish and $\mathcal{E}(g_+) \leq \mathcal{E} (g)$ holds we can then
infer
$$\|f\|_{\mathcal{V}}^2 \leq\diam_{r'} (X) \mathcal{E}(f).$$
As clearly, $r' \leq r$ we see that the best constant is given by
$$c_P = \diam_{r'} (X).$$ Indeed, with a similar argument it is
not hard to see that $r'$ equals $r$. Here, again this best constant
can be seen as a form of 'intrinsic Neumann  inradius' as
 due to the definition of $r'$ and Theorem \ref{char-inradius} and we   have
$$c_P = \diam_{r'} (X) = \sup\{\diam_{r_\Omega}(X) \mid
\Omega\subsetneq X\} = \sup \{ \inr_{r_\Omega}(\Omega) \mid
\Omega\subsetneq X\}.$$
\end{remark}

\section{Bounds on higher eigenvalues}\label{higher}
When $m$ is finite a  Poincaré inequality of the form
$\|f\|^2_{\mathcal{V}} \leq  c \mathcal{E}(f)$ can also be employed
to obtain bounds for large eigenvalues of the operator $H_m^{(N)}$.
This is discussed in the present section. We suppress the dependence on $m$
and denote by
$$0 = \lambda_0 < \lambda_1 \leq \lambda_2 \leq \ldots\leq \lambda_n \leq
\ldots$$
the Eigenvalues of  $H^{(N)}_m$, counted with multiplicity. The possibility of
controlling higher eigenvalues by Poincaré inequalities comes from the
following
lemma and the considerations in Section~\ref{Subgraphs}.

\begin{lemma}\label{lemma:general eigenvalue estimate}
Let $(X,b)$ a graph and let $m$ a measure of full support on $X$ and suppose
that $H^{(N)}_m$ has pure point spectrum. For all $n \in \mathbb N$ and all
sets
$F \subset X$ of cardinality $n$ such that $\delta_x\in\mathcal{D}$ for all
$x\in F$ we have
$$\lambda_{n + 1} \geq \lambda_{X \setminus F},$$
where $\lambda_{X \setminus F}$ is the smallest eigenvalue of $H^{X \setminus
F}_m$.
\end{lemma}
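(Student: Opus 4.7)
The plan is to deduce the bound from the Courant--Fischer min--max principle, by exhibiting an $(n+1)$-dimensional subspace whose orthogonal complement within the form domain consists of functions admissible in $\mathcal{E}^{X\setminus F}$. As usual with higher-eigenvalue bounds, the geometric meat is encoded in a clever choice of test subspace; the analytic work is then essentially a one-line application of min--max.

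First I would assemble the test subspace
$$V := \lin\{1\} + \lin\{\delta_x \mid x \in F\} \subset \ell^{2}(X,m).$$
The constant function $1$ lies in $\ell^{2}(X,m)$ because the ambient setting of Section~\ref{higher} is the finite-measure case ($m$ a probability measure), and it lies in $\mathcal{D}$ trivially since $\mathcal{E}(1) = 0$. Each $\delta_{x}$ with $x \in F$ lies in $\mathcal{D}\cap\ell^{2}(X,m)$ by the hypothesis on $F$. Linear independence of $\{1\} \cup \{\delta_{x} : x \in F\}$ is immediate, so $\dim V = n+1$ and $V$ sits inside the form domain of $\mathcal{E}^{(N)}$.

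Next I would invoke the variational (max--min) characterization of the eigenvalues of $H^{(N)}_m$: since this operator has pure point spectrum by assumption and is self-adjoint on the closure of its form domain,
$$\lambda_{n+1} \;\geq\; \inf\Bigl\{\tfrac{\mathcal{E}(f)}{\|f\|_{2}^{2}} \;\Big|\; f \in \mathcal{D}\cap \ell^{2}(X,m),\; f \perp V,\; f \neq 0\Bigr\}.$$
Now observe that $\langle f,\delta_{x}\rangle_{\ell^{2}(X,m)} = f(x)\,m(x)$, and $m$ has full support, so $f \perp \delta_{x}$ forces $f(x)=0$. Hence every $f\perp V$ vanishes on $F$ and is therefore an admissible test function for the Dirichlet form $\mathcal{E}^{X\setminus F}$. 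For any such $f$ the Rayleigh quotient $\mathcal{E}(f)/\|f\|_{2}^{2}$ is bounded below by $\lambda_{X\setminus F}$, and taking the infimum yields the desired inequality.

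The only point requiring a moment of care is the applicability of Courant--Fischer in the present setting, since the form $\mathcal{E}^{(N)}$ need not be densely defined in $\ell^{2}(X,m)$; however, as noted already in Section~\ref{setup}, $H^{(N)}_m$ is self-adjoint in the closure of its form domain, and the min--max principle transfers verbatim to that subspace (and one just needs $V$ to lie in it, which it does). This is the step I would expect to draw the main attention in the write-up, though it is really a routine check rather than a genuine obstacle.
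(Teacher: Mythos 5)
Your proposal is correct and follows essentially the same route as the paper: both apply the min--max principle to the $(n+1)$-dimensional space spanned by $1$ and $\{\delta_x\}_{x\in F}$, observe that orthogonality to $\delta_x$ (via full support of $m$) forces vanishing on $F$, and then bound the resulting Rayleigh quotients by $\lambda_{X\setminus F}$. The paper's write-up is just a condensed version of the same computation.
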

\begin{proof} Using the min-max principle and the fact that $1$ is an
eigenfunction to the eigenvalue $\lambda_0 = 0$, for $F = \{x_1,\ldots,x_n\}$
we obtain
 \begin{align*}
  \lambda_{n+1} &\geq  \inf \left\{ \frac{\mathcal{E}(g)}{\|g\|_2^2} \,
\middle|\, 0 \neq g \in D(\mathcal E^{(N)}) \cap
\{\delta_{x_1},\ldots,\delta_{x_n},1\}^\perp\right\}\\
  &= \inf \left\{ \frac{\mathcal{E}(g)}{\|g\|_2^2} \,\middle|\, 0\neq  g \in
D(\mathcal E ^{(N)}) \text{ with } g|_F = 0 \text{ and } g \perp 1\right\}\\
  &\geq \lambda_{X \setminus F}.
  \end{align*}
 This finishes the proof.
\end{proof}

Recall that $c_P$ is the best constant $c$ for which the inequality
$\|f\|_{\mathcal V}^2 \leq c \mathcal{E}(f)$ holds for all $f \in \mathcal{D}$  and that $c_P^\Omega$ is the best constant $c'$ for which the inequality
$\|f\|_\infty^2 \leq c' \mathcal{E}(f)$ holds for all $f \in \mathcal{D}$ with
$\mathrm{supp} f \subset \Omega$. We also use the convention $c_p = \infty$
respectively $c^\Omega_P = \infty$ if there exists no such constant.

\begin{theorem}
 Let $(X,b)$ be a graph and let $m$ be a finite measure of full support on $X$.
If  $c_P < \infty$, then $H^{(N)}_m$ has pure point spectrum and  for all $n \in \mathbb N$ and all sets $F \subset X$ of cardinality $n$ such that
$\delta_x\in\mathcal{D}$ for all $x\in F$
the following holds.
 \begin{enumerate}[(a)]
  \item $$\lambda_{n+1} \geq \frac{4}{c_P m(X \setminus F)}.$$
  \item If, additionally, $n \geq 1$, then $c^{X \setminus F}_P \leq  c_P
<\infty$ and $$\lambda_{n + 1} \geq \frac{1}{c^{X \setminus F}_P m(X \setminus
F)}.$$
 \end{enumerate}
\end{theorem}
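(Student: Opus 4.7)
The proof breaks naturally into three pieces: the spectral claim, the sharper bound in (a), and (b) together with the auxiliary inequality $c_P^{X\setminus F}\leq c_P$. The only ingredients needed are Lemma \ref{lemma:general eigenvalue estimate}, Proposition \ref{proposition:inequality}(a), Theorem \ref{thm-char}, and the Rayleigh-quotient arguments that already appeared in the proofs of Corollary \ref{corollary:pure point spectrum} and Corollary \ref{finite-measure}.

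Pure point spectrum is immediate. Since $c_P<\infty$ is precisely the global Poincaré inequality on $\mathcal{D}$, Corollary \ref{corollary:pure point spectrum} applies to $H^{(N)}_{m/m(X)}$. Rescaling the measure by a positive constant multiplies the associated operator by that constant without changing eigenfunctions, so $H^{(N)}_m$ inherits pure point spectrum.

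For (a), I would begin with the min-max bound established in the proof of Lemma \ref{lemma:general eigenvalue estimate}:
$$\lambda_{n+1}\geq \inf\left\{\frac{\mathcal{E}(g)}{\|g\|_2^2}\,:\,0\neq g\in D(\mathcal{E}^{(N)})\cap\{\delta_{x_1},\ldots,\delta_{x_n},1\}^{\perp}\right\}.$$
Because $m$ has full support, the two orthogonality conditions unpack to $g|_F=0$ and $\sum_{x\in X\setminus F}g(x)m(x)=0$; the latter says that $g|_{X\setminus F}$ has zero mean with respect to the probability measure $m':=m|_{X\setminus F}/m(X\setminus F)$ on $X\setminus F$. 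By Theorem \ref{thm-char}, $c_P<\infty$ forces $\mathcal{D}\subset \ell^\infty(X)$, so $g$ is bounded, and Proposition \ref{proposition:inequality}(a) applied on $X\setminus F$ with measure $m'$ yields
$$\frac{\|g\|_2^2}{m(X\setminus F)}\leq \frac{1}{4}\|g|_{X\setminus F}\|_{\mathcal{V}}^2\leq \frac{1}{4}\|g\|_{\mathcal{V}}^2\leq \frac{c_P}{4}\,\mathcal{E}(g),$$
where the middle step uses that $g$ vanishes on $F$, so $0$ belongs to the range of $g$ on $X$, making the variation on $X$ dominate that on $X\setminus F$. Rearranging gives (a); taking $F=\emptyset$ recovers the sharp bound of Theorem \ref{thm-computing}.

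For (b), the hypothesis $n\geq 1$ is essential in order to deduce $c_P^{X\setminus F}\leq c_P$. Indeed, any $g\in\mathcal{D}$ with $\mathrm{supp}(g)\subset X\setminus F$ vanishes on the nonempty set $F$, so $\inf g\leq 0\leq \sup g$ and hence $\|g\|_\infty\leq \|g\|_{\mathcal{V}}$; thus $\|g\|_\infty^2\leq \|g\|_{\mathcal{V}}^2\leq c_P\,\mathcal{E}(g)$, giving $c_P^{X\setminus F}\leq c_P<\infty$. The eigenvalue bound is then obtained by the same Rayleigh-quotient argument used in the proof of Corollary \ref{finite-measure}: for $g$ supported in $X\setminus F$,
$$\|g\|_2^2\leq m(X\setminus F)\|g\|_\infty^2\leq m(X\setminus F)\,c_P^{X\setminus F}\,\mathcal{E}(g),$$
so $\lambda_{X\setminus F}\geq 1/\bigl(c_P^{X\setminus F}\,m(X\setminus F)\bigr)$, and Lemma \ref{lemma:general eigenvalue estimate} finishes (b). The only subtlety worth flagging is to keep the two mechanisms separate: (a) exploits orthogonality to $1$ to extract the factor $4$ through Proposition \ref{proposition:inequality}(a), whereas (b) exploits the mere nonemptiness of $F$ to replace $c_P$ by the smaller constant $c_P^{X\setminus F}$ at the price of that factor.
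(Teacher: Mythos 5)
Your proof is correct and follows essentially the same route as the paper's: pure point spectrum via Corollary~\ref{corollary:pure point spectrum}, part (a) via the min-max bound of Lemma~\ref{lemma:general eigenvalue estimate} combined with Proposition~\ref{proposition:inequality}(a) applied to the normalized restriction of $m$ to $X\setminus F$, and part (b) via $\|g\|_\infty\le\|g\|_{\mathcal V}$ for functions vanishing on the nonempty set $F$. You are in fact slightly more careful than the paper in two harmless spots (rescaling $m$ to a probability measure for the pure-point claim, and computing the Rayleigh-quotient bound for $\lambda_{X\setminus F}$ directly instead of citing Theorem~\ref{spectral-theory-omega}, which is stated for probability measures).
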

\begin{proof}
That the spectrum of $H^{(N)}_m$ is pure point was already observed  in
Corollary~\ref{corollary:pure point spectrum}.

(a):  As seen in the proof of Lemma~\ref{lemma:general eigenvalue
estimate} we have
$$\lambda_{n+1} \geq \inf \left\{ \frac{\mathcal{E}(g)}{\|g\|_2^2} \,\middle|\,
0\neq  g \in D(\mathcal E ^{(N)}) \text{ with } g|_F = 0 \text{ and } g \perp
1\right\}.$$
For $g \in \ell^2(X,m)$ with $g|_F = 0$ and $g \perp 1$
Proposition~\ref{proposition:inequality} yields  $\|g\|^2_2
\frac{4}{m(X \setminus F)} \leq \|g\|_{\mathcal V}^2$. This
inequality combined with the Poincaré inequality for $\mathcal E$
shows the claim.

(b): A function which vanishes in (more than) one point of $X$
satisfies $\|f\|_\infty  \leq  \|f\|_{\mathcal V}$. Therefore, $c^{X
\setminus F}_P \leq  c_P <\infty$. With this at hand the claim
follows from Lemma~\ref{lemma:general eigenvalue estimate} and
Theorem~\ref{spectral-theory-omega}.
\end{proof}

\begin{remarks}
\begin{enumerate}[(a)]
 \item The method of proof is taken from \cite{KLSW},
where a similar statement as in (a) is proven for the operator
$H^{(D)}_m$   for uniformly transient graphs. Note that the graphs
there always satisfy the additional summability condition \eqref{S}.

\item  We chose to present both estimates because (a) gives a better
constant (which is sharp for $\lambda_1$, cf.
Theorem~\ref{thm-computing}),  while (b) may provide better
asymptotics for large eigenvalues.

\item  Note that we have defined the Poincar\'{e} inequality on proper
subsets of $X$ via the supremum norm. If we had chosen to define it
via the variational norm the estimate in (b) would contain the
factor $4$ (as the estimate in (a)) does.

\item  We note that independent of our work related estimates on
higher eigenvalues for canonically compactifiable graphs are also
contained in \cite{HKSW}.
\end{enumerate}

\end{remarks}

\end{document}